%\documentclass{article}
%\usepackage[utf8]{inputenc}
% Written in AMS-LaTeX / LaTeX2e
% For LaTeX2e: use the second line
%%%%%%%%%%%%%%%%%%%%%%%%%%%%%%%%%%%%%%%%%%%%%%%%%%%%%%%%%%%%%%%%%%%%%%
\documentclass[12pt]{amsart}
\usepackage{amsmath}
\usepackage{amssymb}
\usepackage{amscd}
\usepackage{multicol}
\usepackage{MnSymbol}
\usepackage{tikz}
\usepackage[OT2,T1]{fontenc}
\usepackage{mathrsfs}
\usepackage{comment} 
\usepackage{url}
\usepackage{tikz-cd}
\usepackage{stmaryrd}
\usepackage[all]{xy}
\usepackage{longtable}
\usepackage{multirow,bigdelim}
\DeclareSymbolFont{cyrletters}{OT2}{wncyr}{m}{n}
%\DeclareMathSymbol{\Sha}{\mathalpha}{cyrletters}{``58}
 \DeclareMathOperator{\Lie}{Lie}
\DeclareMathOperator{\Ker}{Ker} \DeclareMathOperator{\GL}{GL}
\DeclareMathOperator{\Mat}{Mat} 
 
 \DeclareMathOperator{\Gal}{Gal}

\DeclareMathOperator{\trdeg}{tr\text{.}deg} 
  
\DeclareMathOperator{\Li}{Li}    
\DeclareMathOperator{\rank}{rank}

\DeclareMathOperator{\dv}{div}
\DeclareMathOperator{\ord}{ord}
\DeclareMathOperator{\Span}{Span}

\newcommand{\tr}{\mathrm{tr}}

\setlength{\topmargin}{0truein} \setlength{\headheight}{.35truein}
\setlength{\headsep}{.25truein} \setlength{\textheight}{9.25truein}
\setlength{\footskip}{.25truein} \setlength{\oddsidemargin}{0truein}
\setlength{\evensidemargin}{0truein}
\setlength{\textwidth}{6.5truein} \setlength{\voffset}{-0.625truein}
\setlength{\hoffset}{0truein}
%%%%%%%%%%%%%%%%%%%%%%%%%%%%%%%%%%%%%%%%%%%%%%%%%%%%%%%%%%%%%%%%%%%%%%
%\documentclass[12pt,twoside]{amsart}
%\usepackage{amssymb}
%\usepackage{amscd}
%%%%%%%%%%%%%%%%%%%%%%%%%%%%%%%%%%%%%%%%%%%%%%%%%%%%%%%%%%%%%%%%%%%%%%
\title[Linear equations on tensor powers of the Carlitz module]{L\MakeLowercase{inear relations among algebraic points on tensor powers of the }C\MakeLowercase{arlitz module}}
%EDITOR: Please consider whether ``...in the positive characteristic setting'' would better express the intended meaning.

\author{Yen-Tsung Chen and Ryotaro Harada}
%\thanks{This is not a version for distribution.}

\address{Department of Mathematics, Pennsylvania State University, University Park, PA 16802, U.S.A.}

\email{ytchen.math@gmail.com}

\address{Tokyo University of Science, 1-3 Kagurazaka, Assistant Professor's office (3), 15th Floor, Building No. 1, Shinjuku City, Tokyo 1628601, Japan.}
\email{r-harada@rs.tus.ac.jp}
%\thanks{{\tt THIS IS A DRAFT}}
\date{December 1, 2025}

%%%%%%%%%%%%%%%%%%%%%%%%%%%%%%%%%%%%%%%%%%%%%%%%%%%%%%%%%%%%%%%%%%%%%%
% Definitions for new environments
% theorem style plain --- default
\newtheorem{thm}{Theorem}[section]
\newtheorem{lem}[thm]{Lemma}
\newtheorem{cor}[thm]{Corollary}
\newtheorem{prop}[thm]{Proposition}
\theoremstyle{remark}

\subjclass[2020]{11J72, 11J93, 33E50}
\keywords{Carlitz polylogarithms, tensor powers of the Carlitz module, dual $t$-motive}
%numberwithequation{section}
\theoremstyle{definition}
\newtheorem{defn}[thm]{Definition}
\newtheorem{rem}[thm]{Remark}

\newtheorem{eg}[thm]{Example}

\numberwithin{equation}{section}
%\numberwithin{section}{chapter}
%%%%%%%%%%%%%%%%%%%%%%%%%%%%%%%%%%%%%%%%%%%%%%%%%%%%%%%%%%%%%%%%%%%%%%
\begin{document}
\bibliographystyle{amsalpha+}

%%%%%%%%%%%%%%%%%%%%%%%%%%%%%%%%%%%%%%%%%%%%%%%%%%%%%%%%%%%%%%%%%%%%%%
\begin{abstract}
    In the present paper, we study linear equations on tensor powers of the Carlitz module using the theory of Anderson dual $t$-motives and a detailed analysis of a specific Frobenius difference equation. As an application, we derive some explicit sufficient conditions for the linear independence of Carlitz polylogarithms at algebraic points in both $\infty$-adic and $v$-adic settings. 
\end{abstract}

\maketitle
\tableofcontents
\setcounter{section}{0}

\section{Introduction}

\subsection{Motivation and the main result}
    Let $\mathbb{G}_m$ be the multiplicative group and let $\alpha_1,\dots,\alpha_\ell\in\mathbb{G}_m(\overline{\mathbb{Q}})=\overline{\mathbb{Q}}^\times$. 
    %be pairwise {\color{red}(delete)} distinct algebraic points.
    Then they are called \emph{multiplicatively dependent} if there exist integers $n_1,\dots,n_\ell$, not all zero, such that
    \[
        \alpha_1^{n_1}\cdots\alpha_\ell^{n_\ell}=1.
    \]
    It is natural to ask how to decide if they are multiplicatively dependent. Moreover, let
    \[
        \mathcal{R}:=\{(n_1,\dots,n_\ell)\in\mathbb{Z}^\ell\mid\alpha_1^{n_1}\cdots\alpha_\ell^{n_\ell}=1\}\subset\mathbb{Z}^\ell.
    \]
    Can one find a set of generators of $\mathcal{R}$ as a $\mathbb{Z}$-module and estimate its {size}? The estimations in this direction were first established by Baker using his celebrated theory of linear forms in logarithms. Another approach is due to Masser, using the geometry of numbers. More precisely, let $K$ be a number field containing $\alpha_1,\dots,\alpha_\ell$. We identify $\mathbb{G}_m(K)$ with $\mathbb{P}^1(K)$ minus two points $(1:0)$ and $(0:1)$ by {identifying} $x$ {with} $(1:x)$. Then we can employ the usual logarithmic height function on $\mathbb{P}^1(K)$. For $\mathbf{x}=(x_1,\dots,x_\ell)\in\mathbb{Z}^\ell$, the size of $\mathbf{x}$ is defined by $|\mathbf{x}|:=\max_{i=1}^\ell\{|x_i|\}$. It was shown in \cite[Theorem~{$\mathbb{G}_m$},~p.~253]{Mas88} that there is a set $\mathbf{m}$ of generators for $\mathcal{R}$ so that the size $|\mathbf{m}|$ is bounded by an explicit quantity which involves the number of algebraic points, the heights of these points, and the cardinality of the roots of unity in $K$. The main result in the present paper is to study an analogue of the multiplicative dependence question for the Carlitz module and its higher dimensional generalizations.
    
    Let $\mathbb{F}_q$ be the finite field with $q$ elements, for $q$ a power of a prime number $p$. Let $t, \theta, X$ be independent variables. Let $A:=\mathbb{F}_q[\theta]$ be the polynomial ring and $k:=\mathbb{F}_q(\theta)$ be its field of fractions. Let $|\cdot|_\infty$ be the normalized non-archimedean absolute value on $k$ so that $|f/g|_\infty:=q^{\deg_{\theta}(f)-\deg_{\theta}(g)}$ for 
    %{\color{red}nonzero}
    $f,g\in A$ with $g\neq 0$. We denote by $k_\infty$ the completion of $k$ with respect to $|\cdot|_\infty$. We further set $\mathbb{C}_\infty$ to be the completion of a fixed algebraic closure $\overline{k_\infty}$ {of $k_{\infty}$} and we fix $\overline{k}$ to be the algebraic closure of $k$ inside $\mathbb{C}_\infty$. For a commutative algebra $R$ containing $A$, we set $R[\tau]$ to be the twisted polynomial ring subject to the relation $\tau\alpha=\alpha^q\tau$ for $\alpha\in R$. Note that $R[\tau]$ can be realized as the $\mathbb{F}_q$-linear endomorphism ring on $R$. 
    
    The Carlitz module is a pair $\mathbf{C}:=(\mathbb{G}_a,[\cdot])$, where $([\cdot]:=a\mapsto[a]):\mathbb{F}_q[t]\to k[\tau]$ is an $\mathbb{F}_q$-algebra homomorphism uniquely determined by $[t]:=\theta+\tau$. We can associate an $\mathbb{F}_q$-linear power series $\exp_\mathbf{C}(X)\in k\llbracket X\rrbracket$ satisfying $\exp_\mathbf{C}(X)\equiv X(\mathrm{mod}~X^q)$ and $\exp_{\mathbf{C}}(a(\theta)X)=[a](\exp_{\mathbf{C}}(X))$ for all $a\in\mathbb{F}_q[t]$. In addition, $\exp_\mathbf{C}(X)$ induces an entire, surjective, $\mathbb{F}_q$-linear map on $\mathbb{C}_\infty$. Furthermore, the following short exact sequences of $\mathbb{F}_q[t]$-modules commute for any $a\in\mathbb{F}_q[t]$
    \begin{equation}\label{Eq:Carlitz_Modules_Uniformization}
        \begin{tikzcd}
            0 \arrow[r] & \Lambda_{\mathbf
        {C}} \arrow[d, "a(\theta)"] \arrow[r] & \mathbb{C}_\infty \arrow[d, "a(\theta)"] \arrow[r, "\exp_{\mathbf{C}}(\cdot)"] & \mathbb{C}_\infty \arrow[d, "{[}a{]}"] \arrow[r] & 0 \\
            0 \arrow[r] & \Lambda_{\mathbf
        {C}} \arrow[r] & \mathbb{C}_\infty \arrow[r, "\exp_{\mathbf{C}}(\cdot)"] & \mathbb{C}_\infty \arrow[r] & 0
        \end{tikzcd}
    \end{equation}
    where $\Lambda_\mathbf{C}:=\Ker(\exp_{\mathbf{C}})=A\Tilde{\pi}$ and $\tilde{\pi}:=\theta (-\theta)^{1/(q-1)}\prod_{i=1}^{\infty} (1-\theta^{1-q^i})^{-1}\in \mathbb{C}_{\infty}^{\times}$ {for some fixed $(q-1)$th root of $-\theta$}. Note that \eqref{Eq:Carlitz_Modules_Uniformization} could be {regarded as} an analogue of the analytic uniformization of the complex multiplicative group $\mathbb{G}_m$. Let $n\in\mathbb{Z}_{>0}$. {The $n$-th tensor power of the Carlitz module, which is a higher dimensional generalization of the Carlitz module,} is the pair $\mathbf{C}^{\otimes n}:=(\mathbb{G}_a^n,[\cdot]_n)$, where $([\cdot]_n:=a\mapsto[a]_n):\mathbb{F}_q[t]\to\Mat_{n}(k[\tau])$ is an $\mathbb{F}_q$-algebra homomorphism uniquely determined by 
    \[
        [t]_n:=\begin{pmatrix}
            \theta & 1 & &  \\
            & \theta & \ddots & \\
            & & \ddots & 1 \\
            \tau & & & \theta
            \end{pmatrix}\in\Mat_{n}(k[\tau]).
    \]
    Note that {$\mathbf{C}^{\otimes 1}$} is simply the Carlitz module. 
    
    We denote by $\mathbf{C}^{\otimes n}(\overline{k})=\mathbb{G}_a^n(\overline{k})$ the $\overline{k}$-valued points on the additive group equipped with the $\mathbb{F}_q[t]$-module structure arising from $[\cdot]_n$. Given $P_1,\dots,P_\ell\in\mathbf{C}^{\otimes n}(\overline{k})$, {we say that} they are \emph{linearly dependent over $\mathbb{F}_q[t]$} if there exist $a_1,\dots,a_\ell\in\mathbb{F}_q[t]$, not all zero, such that
    \[
        [a_1]_nP_1+\cdots+[a_\ell]_nP_\ell=0.
    \]
    With the analogy between the complex multiplicative group $\mathbb{G}_m$ and the Carlitz module $\mathbf{C}$, one can ask an analogue of the multiplicative dependence problem, that is, how to determine {whether} these points are linearly dependent over $\mathbb{F}_q[t]$ {or not}. Moreover, let
    \[
        G:=\{(a_1,\dots,a_\ell)\in\mathbb{F}_q[t]^\ell\mid [a_1]_nP_1+\cdots[a_\ell]_nP_\ell=0\}\subset\mathbb{F}_q[t]^\ell.
    \]
    {One can also ask if we can find a set of generators of $G$ over $\mathbb{F}_q[t]$ and estimate its size.} We give {an affirmative answer as follows}. Let $L\subset\overline{k}$ be a finite extension of $k$, {and} for a divisor $D\in\mathrm{div}(L)$ we set
    \[
        \mathcal{L}(D)=\{f\in L^\times\mid \mathrm{div}(f)+D\geq 0\}\cup\{0\}.
    \]
    For $\mathbf{m}:=(m_1,\dots,m_\ell)\in\mathbb{F}_q[t]^\ell$, we define $\deg_t(\mathbf{m}):=\max_{i=1}^\ell\{\deg_t(m_i)\}$.
    
    \begin{thm}\label{Thm:Estimate_of_Relations}
        Let $L\subset\overline{k}$ be a finite extension of $k$ and $n\in\mathbb{Z}_{>0}$. Let $\mathbf{C}^{\otimes n}=(\mathbb{G}_a^n,[\cdot]_n)$ be the $n$-th tensor power of the Carlitz module and $P_1,\dots,P_\ell\in \mathbf{C}^{\otimes n}(L)$ be distinct non-zero {$L$-valued points}. Then there exists an explicitly constructed divisor $D\in\mathrm{div}(L)$ and $\{\mathbf{m}_1,\dots,\mathbf{m}_\nu\}\subset G$ with bounded degree $\deg_t(\mathbf{m}_i)\leq n(\dim_{\mathbb{F}_q}\mathcal{L}(D)+\ell)$ such that $\rank_{\mathbb{F}_q[t]}G=\nu$ and $G=\mathrm{Span}_{\mathbb{F}_q[t]}\{\mathbf{m}_1,\dots,\mathbf{m}_\nu\}$.
    \end{thm}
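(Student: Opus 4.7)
The approach is to mirror Masser's strategy for $\mathbb{G}_m$: translate the question of finding bounded-degree generators of $G$ into an $\mathbb{F}_q$-linear algebra problem, then solve it by a Riemann-Roch type dimension count combined with the structure theorem for finitely generated modules over the principal ideal domain $\mathbb{F}_q[t]$.

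First, I would explicitly construct the divisor $D \in \mathrm{div}(L)$. Writing $P_i = (P_{i,1},\dots,P_{i,n}) \in L^n$, take $D$ to be an effective divisor whose support includes the poles of every coordinate $P_{i,j}$ and the infinite places of $L$, so that each $P_{i,j}$ lies in $\mathcal{L}(D)$. A further adjustment at infinity is needed so that $\mathcal{L}(D)$ can absorb the action of $[\cdot]_n$ on coordinates, using the explicit form of $[t]_n$ as $\theta I_n$ plus a nilpotent contribution plus a single $\tau$-term.

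Next, I would filter $G$ by the $t$-degree. For each integer $N \geq 0$, set
\[
    V_N := \bigl\{(a_1,\dots,a_\ell) \in \mathbb{F}_q[t]^\ell \;\big|\; \deg_t(a_i) \leq N \bigr\},
\]
an $\mathbb{F}_q$-vector space of dimension $\ell(N+1)$. The evaluation $\Phi_N : V_N \to L^n$, $\mathbf{a} \mapsto \sum_i [a_i]_n P_i$, is $\mathbb{F}_q$-linear with kernel $G \cap V_N$. Using the expansion of $[t]_n$ and tracking pole orders through its iterates, I would show that the image of $\Phi_N$ lies in a Riemann-Roch space $\mathcal{L}(D_N)^n$, where $D_N$ is an explicit enlargement of $D$ whose $\mathbb{F}_q$-dimension is bounded linearly in $N$ starting from $\dim_{\mathbb{F}_q}\mathcal{L}(D)$. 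A pigeonhole count then yields
\[
    \dim_{\mathbb{F}_q}(G \cap V_N) \;\geq\; \ell(N+1) - n\dim_{\mathbb{F}_q}\mathcal{L}(D_N).
\]

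Third, since $\mathbb{F}_q[t]$ is a PID and $G$ is a submodule of the free module $\mathbb{F}_q[t]^\ell$, $G$ is free of rank $\nu \leq \ell$. A descent argument on leading $t$-coefficients shows that, once $\dim_{\mathbb{F}_q}(G \cap V_N)$ is sufficiently large, the set $G \cap V_N$ already generates $G$ as an $\mathbb{F}_q[t]$-module. The threshold $N = n(\dim_{\mathbb{F}_q}\mathcal{L}(D) + \ell)$ is precisely where the above inequality becomes strong enough for this conclusion; extracting an $\mathbb{F}_q[t]$-basis of $G$ from $G \cap V_N$ yields the desired $\mathbf{m}_1,\dots,\mathbf{m}_\nu$.

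The main obstacle is the second step: controlling the growth of $\dim_{\mathbb{F}_q}\mathcal{L}(D_N)$ linearly in $N$. Since the Frobenius $\tau$ appearing in $[t]_n$ multiplies pole orders by $q$, a naive bookkeeping yields exponential growth and the dimension count collapses. Circumventing this requires either choosing $D$ so that its pole structure is stable under $\tau$ up to a bounded shift, or passing through the Anderson dual $t$-motive attached to $(P_1,\dots,P_\ell)$ and exploiting the specific Frobenius difference equation it satisfies to reduce pole growth to linear order---this is the place where the theory of dual $t$-motives advertised in the abstract enters the argument.
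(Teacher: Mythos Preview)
Your plan has the right overall shape---reduce to an $\mathbb{F}_q$-linear problem controlled by a Riemann--Roch space---but the route you sketch does not close, and the place you flag as ``the main obstacle'' is exactly the point where it breaks. Iterating $[t]_n$ on a point in $L^n$ really does blow up pole orders by a factor of $q$ at each step, so $\dim_{\mathbb{F}_q}\mathcal{L}(D_N)$ grows like $q^N$, not linearly in $N$. No choice of $D$ ``stable under $\tau$ up to a bounded shift'' can repair this: the Frobenius genuinely multiplies valuations by $q$. Consequently the pigeonhole inequality $\dim_{\mathbb{F}_q}(G\cap V_N)\geq \ell(N+1)-n\dim_{\mathbb{F}_q}\mathcal{L}(D_N)$ is never positive for large $N$, and the filtration argument yields nothing.

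The paper circumvents this not by refining your estimate but by abandoning the direct evaluation map $\Phi_N$ altogether. Via the dual $t$-motive identification $\epsilon_1\circ\Theta^{-1}:C^{\otimes n}/(\sigma-1)C^{\otimes n}\xrightarrow{\sim}\mathbf{C}^{\otimes n}(\overline{k})$, the relation $\sum_i[a_i]_nP_i=0$ is rewritten as the existence of an auxiliary $g\in L[t]$ satisfying the single Frobenius difference equation
\[
g^{(1)}-(t-\theta)^n g \;=\; a_1\mathbf{f}_1+\cdots+a_\ell\mathbf{f}_\ell,
\]
where $\mathbf{f}_i\in L[t]$ encodes $P_i$. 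The crucial point---and the step your plan is missing---is that the coefficients of $g$ lie in a \emph{fixed} Riemann--Roch space $\mathcal{L}(D)$, with $D$ built from the pole data of the $P_i$ and $\lfloor n/(q-1)\rfloor$, \emph{independently of} $\deg_t(a_i)$. This is proved by a short valuation argument playing $q\cdot\mathrm{ord}_w(g)$ against $n\cdot\mathrm{ord}_w(t-\theta)+\mathrm{ord}_w(g)$; it is precisely here that the Frobenius, instead of hurting, forces $\mathrm{ord}_w(g)$ to be bounded below. Once $g$ and the right-hand side are expressed in a fixed $\mathbb{F}_q$-basis of $\mathcal{L}(D)$, the relation becomes a single $\mathbb{F}_q[t]$-linear system $\mathfrak{B}\mathbf{x}^{\mathrm{tr}}=0$ with $\mathfrak{B}$ of size roughly $(\dim\mathcal{L}(D)+\ell)$ and $\deg_t(\mathfrak{B})\leq n$; a Siegel-type lemma over $\mathbb{F}_q[t]$ then produces generators of the solution space with $\deg_t\leq n(\dim_{\mathbb{F}_q}\mathcal{L}(D)+\ell)$, which project onto generators of $G$. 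So the dual $t$-motive is not used to tame growth in a filtration---it replaces the filtration by one finite linear system.
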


   %{\color{blue} 
   We mention that Denis considered this type of linear dependence problem for a family of $t$-modules including tensor powers of the Carlitz module. He established an analogue of Masser's result \cite[Appendix~2]{Den92a} using his theory of canonical height introduced in \cite{Den92b}. In the present paper, we adopt a different method and obtain a new upper bound to this problem. Unlike Denis' upper bound which is given by the analytic invariant from the canonical height, the upper bound established in the present paper only includes the algebraic terms such as the dimension of the Riemann-Roch space $\mathcal{L}(D)$ over $\mathbb{F}_q$. Our approach involves the theory of Anderson's dual $t$-motives and the analysis of Frobenius difference equations. The techniques used here are rooted in \cite{Cha16,KL16} for the study of characteristic $p$ multiple zeta values, and in \cite{Che20a,Ho20} to the study of linear relations among algebraic points on Drinfeld modules. In addition, our strategy could be used to show that the rank of the $\mathbb{F}_q[t]$-module $\mathbf{C}^{\otimes n}(L)$ is countably infinite (see Theorem~\ref{Thm:Linearly_Independence}). This gives an alternative proof of a part of \cite[Cor.3.6.1]{Kua23}, where Kuan's method is based on a generalization of Poonen's work \cite{Poo95} on local height function of certain $t$-modules.
   %} {\color{red}This paragraph would cause confusions. It lets the reader feel that Denis' result already covers yours and you prove the result with different proofs.}
    
\subsection{Carlitz polylogarithms at algebraic points}
    %Let $n\in\mathbb{Z}_{>0}$. The $n$-th polylogarithm is defined by the following power series with rational coefficients:
    %\begin{equation}\label{Eq:Classical_PL}
    %    \mathscr{L}_n(z):=\sum_{m>0}\frac{z^{m}}{m^{n}}\in\mathbb{Q}\llbracket z\rrbracket.
    %\end{equation}
    %It converges for any complex number $z_0\in\mathbb{C}$ with $|z_0|<1$. The study of linear independence for polylogarithms at algebraic points has been developed by using Diophantine approximation and tools from transcendence theory. In a recent work of David, Hirata-Kohno, and Kawashima \cite{DHKK20}, they found a sufficient condition for polylogarithms at algebraic points {to be} linearly independent {over number fields} by extending P\`ade approximation. In particular, for a number field $K$, they give an explicit condition for distinct $\alpha_1,\dots,\alpha_\ell\in K^\times$ and $n\in\mathbb{Z}_{>0}$ so that the set of $n\ell+1$ numbers
    %\[
    %    \{1,\mathscr{L}_1(\alpha_1),\dots,\mathscr{L}_n(\alpha_1),\dots,\mathscr{L}_1(\alpha_\ell),\dots,\mathscr{L}_n(\alpha_\ell)\}
    %\]
    %is a linearly independent set over $K$ (see \cite[Thm.~2.1]{DHKK20} for more details). In fact, their result even works for Lerch functions, a generalization of polylogarithms.

By virtue of Anderson-Thakur \cite{AT90}, it is known that the special values of Carlitz polylogarithm (CPL), and $v$-adic CPL ($v$ is a finite place in $k$) are interpreted by some algebraic points of $\mathbf{C}^{\otimes n}(\overline{k})$.
Then, by giving a slight generalization of this interpretation (see Proposition \ref{Prop:generalization_of_AT90}), we can apply our techniques of proving Theorem \ref{Thm:Estimate_of_Relations} to obtain sufficient conditions of linear/algebraic independence among the special values of CPLs, and $v$-adic CPLs, as below.
%By virtue of the work by Anderson-Thakur \cite{AT90}, we can relate algebraic points of $\mathbf{C}^{\otimes n}$ to the special values of Carlitz polylogarithm (CPL), which we recall as below. }
    
    We set $L_0:=1$ and $L_i:=(\theta-\theta^q)\cdots(\theta-\theta^{q^i})\in A$ for each $i\geq 1$. For $n\in\mathbb{Z}_{>0}$, the $n$-th CPL was defined by Anderson and Thakur \cite{AT90} as follows:
    \begin{equation}\label{Eq:CPL}
        \Li_n(z):=\underset{i\geq 0}{\sum}\frac{z^{q^i}}{L_{i}^{n}}\in k\llbracket z\rrbracket.
    \end{equation}
    Note that CPLs can be viewed as an analogue of classical polylogarithms in positive characteristic. The $n$-th CPL converges at $z_0\in\mathbb{C}_\infty$ with $|z_0|_\infty<|\theta|_\infty^{nq/(q-1)}$. It is known due to Anderson and Thakur \cite{AT90} that the $n$-th CPL appears as the last entry of a logarithm of $\mathbf{C}^{\otimes n}$ at a specific point whenever it is defined (see Section~4.1 for related details). 
    
    Let $\mathbb{L}_0:=1$ and $\mathbb{L}_i:=(t-\theta^q)\cdots(t-\theta^{q^i})\in A[t]$ for each $i\geq 1$. For $n\in\mathbb{Z}_{>0}$ and $\mathbf{f}\in\mathbb{C}_\infty[t]$, we define
    \begin{equation}\label{Eq:t_motivic_CPL}
        \mathcal{L}_{\mathbf{f},n}(t):=\sum_{i\geq 0}\frac{\mathbf{f}^{(i)}}{\mathbb{L}_i^n}\in\mathbb{C}_\infty\llbracket t\rrbracket,
    \end{equation}
    where $\mathbf{f}^{(i)}\in\mathbb{C}_\infty[t]$ is the $i$-th Frobenius twisting defined in \eqref{Eq:Frobenius_twisting}. If $\mathbf{f}=f_0+f_1t+\cdots+f_mt^m\in\mathbb{C}_\infty[t]$ with $f_i\in\mathbb{C}_\infty$ for each $0\leq i\leq m$, we set $\|\mathbf{f}\|:=\max_{0\leq i\leq m}\{|f_i|_\infty\}$. By \cite{CY07}, it is known that $\mathcal{L}_{\mathbf{f},n}$ converges at $t=\theta$ whenever $\|\mathbf{f}\|<|\theta|_\infty^{nq/(q-1)}$. In particular, when $\mathbf{f}=z_0\in\mathbb{C}_\infty$ with $\|\mathbf{f}\|=|z_0|<|\theta|_\infty^{nq/(q-1)}$, we have
    \[
        \mathcal{L}_{z_0,n}|_{t=\theta}=\Li_n(z_0).
    \]
    Thus, $\mathcal{L}_{z_0,n}$ can be regarded as a deformation series of $\Li_n(z_0)$. This suggests that $\mathcal{L}_{\mathbf{f},n}(\theta)$ can be constructed as a CPL at $\mathbf{f}\in\mathbb{C}_\infty[t]$ with $\|\mathbf{f}\|<|\theta|_\infty^{nq/(q-1)}$.
        
    Let $L\subset\overline{k}$ be a finite extension of $k$. We set $M_L$ to be the set of places of $L$. For $w\in M_L$ and $\mathbf{f}=f_0+f_1t+\cdots+f_mt^m$, we define $\ord_w(\mathbf{f}):=\min_{i=0}^m\{\ord_w(f_i)\}$. By using the study of linear equations on tensor powers of the Carlitz module, we get a sufficient condition for linear independence among algebraic points of tensor powers of the Carlitz module. This result is stated as Theorem \ref{Thm:Linearly_Independence}. 
    %We give a generalization of Anderson-Thakur \cite{AT90}, 
    We can relate the algebraic points to the special values of CPLs by Proposition \ref{Prop:generalization_of_AT90}.
    %of $\mathbf{C}^{\otimes n}(\overline{k})$ 
    Then, by applying Yu's transcendence result \cite[Thm.~2.3]{Yu91}, we can show that if algebraic points of 
    %$\mathbf{C}^{\otimes n}(L)$ 
    tensor powers of the Carlitz module are $\mathbb{F}_q[t]$-linearly independent, the related special values of CPLs are $k$-linearly independent (see Lemma \ref{Lem:Lower_Bound}).
    %{\color{red}Then,  }  the transcendence result 
    Combining Theorem \ref{Thm:Linearly_Independence}, Lemma \ref{Lem:Lower_Bound} and the transcendence result by Chang \cite[Thm.~5.4.3]{Cha14}, we obtain 
    %we could deduce 
    the following explicit sufficient condition for the special values of CPLs being linearly independent.
    \begin{thm}\label{Thm:Intro_Application}
        Let $L\subset\overline{k}$ be a finite extension of $k$. Let $n\in\mathbb{Z}_{>0}$ and $\mathbf{f}_i=p^{[i]}_0+p^{[i]}_1(t-\theta)+\cdots+p^{[i]}_{n-1}(t-\theta)^{n-1}\in L[t]$ with $p_j^{[i]}\in L$ and $\|\mathbf{f}_i\|<|\theta|_\infty^{nq/(q-1)}$ for $1\leq i\leq \ell$ and $0\leq j\leq n-1$. Suppose that
        \begin{enumerate}
            \item $\mathbf{f}_1,\dots,\mathbf{f}_\ell$ are linearly independent over $\mathbb{F}_q[t]$,
            \item $\mathrm{ord}_w(\mathbf{f}_i)>0$ for all $1\leq i\leq \ell$, if $w\in M_L$ and $w\mid\infty$,
            \item $\mathrm{ord}_w(\mathbf{f}_i)\geq 1-q$ for all $1\leq i\leq\ell$, if $w\in M_L$ and $w\nmid\infty$.
        \end{enumerate}
        Then
        \[
            \dim_{\overline{k}}\Span_{\overline{k}}\{\mathcal{L}_{\mathbf{f}_1,n}(\theta),\dots,\mathcal{L}_{\mathbf{f}_\ell,n}(\theta)\}=\ell.
        \]
        Moreover, if $\|\mathbf{f}_i\|<|\theta|_\infty^{q/(q-1)}$ for each $1\leq i\leq\ell$, then
        \[
            \dim_{\overline{k}}\Span_{\overline{k}}\{1,\mathcal{L}_{\mathbf{f}_1,1}(\theta),\dots,\mathcal{L}_{\mathbf{f}_\ell,1}(\theta),\dots,\mathcal{L}_{\mathbf{f}_1,n}(\theta),\dots,\mathcal{L}_{\mathbf{f}_\ell,n}(\theta)\}=n\ell+1.
        \]
    \end{thm}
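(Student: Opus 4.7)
The plan is to translate the hypothesis that $\mathbf{f}_1,\ldots,\mathbf{f}_\ell$ are $\mathbb{F}_q[t]$-linearly independent into $\overline{k}$-linear independence of the values $\mathcal{L}_{\mathbf{f}_i,n}(\theta)$, by passing through algebraic points on $\mathbf{C}^{\otimes n}$ and invoking Yu's transcendence theorem on logarithms of $t$-modules.

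First, following Anderson--Thakur, I would associate to each $\mathbf{f}_i$ a vector $v_i \in \mathrm{Lie}(\mathbf{C}^{\otimes n})(\mathbb{C}_\infty)$ lying in the domain of convergence of $\exp_{\mathbf{C}^{\otimes n}}$ and having $\mathcal{L}_{\mathbf{f}_i,n}(\theta)$ as its last entry. The convergence bound $\|\mathbf{f}_i\| < |\theta|_\infty^{nq/(q-1)}$ is precisely what makes such a construction meaningful, while the valuation hypothesis (2) at archimedean places and (3) at finite places serve as the integrality conditions needed to guarantee that $u_i := \exp_{\mathbf{C}^{\otimes n}}(v_i) \in \mathbf{C}^{\otimes n}(L)$ is a genuine $L$-rational algebraic point rather than only a $\mathbb{C}_\infty$-point.

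Next, I would use the dual $t$-motive framework and the Frobenius difference equation analysis developed for Theorem~\ref{Thm:Estimate_of_Relations} (and the companion Theorem~\ref{Thm:Linearly_Independence}) to show that a nontrivial $\mathbb{F}_q[t]$-linear relation $\sum_i [a_i]_n u_i = 0$ in $\mathbf{C}^{\otimes n}(L)$ would descend to a nontrivial $\mathbb{F}_q[t]$-linear relation $\sum_i a_i \mathbf{f}_i = 0$ in $L[t]$, contradicting hypothesis (1). Hence $u_1,\ldots,u_\ell$ are $\mathbb{F}_q[t]$-linearly independent in $\mathbf{C}^{\otimes n}(L)$. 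Applying Yu's sub-$t$-module theorem \cite[Thm.~2.3]{Yu91} to $\mathbf{C}^{\otimes n}$ with the logarithms $v_1,\ldots,v_\ell$, I would then obtain the $\overline{k}$-linear independence of $v_1,\ldots,v_\ell$ modulo $\mathrm{Lie}(\mathbf{C}^{\otimes n})(\overline{k})$. Projecting to the last entry yields the first conclusion of the theorem. For the moreover statement, where all tensor levels $1,\ldots,n$ appear together with the constant $1$, I would replace Yu's theorem with Chang's refined criterion \cite[Thm.~5.4.3]{Cha14}, which handles $\mathbf{C}^{\otimes j}$ for $1 \leq j \leq n$ simultaneously; the tighter bound $\|\mathbf{f}_i\| < |\theta|_\infty^{q/(q-1)}$ is precisely what ensures simultaneous convergence at every tensor level.

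The main obstacle is the translation of linear relations in the second paragraph: turning an $\mathbb{F}_q[t]$-linear relation among the algebraic points $u_i$ on $\mathbf{C}^{\otimes n}$ into an $\mathbb{F}_q[t]$-linear relation among the polynomials $\mathbf{f}_i \in L[t]$. This requires tracking the kernel of the correspondence $\mathbf{f} \mapsto u$ carefully at the level of dual $t$-motives, and is exactly where the detailed analysis of the specific Frobenius difference equation promised in the abstract must be exploited.
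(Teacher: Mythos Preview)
Your overall architecture is right, but two of your steps mis-identify what actually does the work. First, conditions~(2) and~(3) are \emph{not} integrality conditions that force $u_i\in\mathbf{C}^{\otimes n}(L)$; the point $u_i=P_i=(p_{n-1}^{[i]},\dots,p_0^{[i]})^{\mathrm{tr}}$ is automatically $L$-rational since the $p_j^{[i]}$ lie in $L$. Those valuation hypotheses enter only in the Frobenius difference equation analysis (Theorem~\ref{Thm:Linearly_Independence}): a putative relation $\sum_i[a_i]_nP_i=0$ is, by Lemma~\ref{Lem:Reduction_Carlitz_Tensor_POwers}, equivalent to the existence of $g\in L[t]$ with $g^{(1)}-(t-\theta)^ng=\sum_ia_i\mathbf{f}_i$, and it is conditions~(2) and~(3) that obstruct any such $g$. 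In particular, the relation among the $u_i$ does not ``descend'' to a relation $\sum_ia_i\mathbf{f}_i=0$; hypothesis~(1) guarantees the right-hand side is nonzero, and then the valuation bounds produce the contradiction.

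The more serious gap is your appeal to \cite[Thm.~2.3]{Yu91} for $\overline{k}$-linear independence. That theorem asserts only that the last coordinate of a nonzero logarithm of an algebraic point on $\mathbf{C}^{\otimes n}$ is transcendental over $k$; as in Lemma~\ref{Lem:Lower_Bound}, this lets you lift a $k$-linear relation among the $\mathcal{L}_{\mathbf{f}_i,n}(\theta)$ (with coefficients in $A$, acting via $\partial[\cdot]_n$) to an $\mathbb{F}_q[t]$-relation among the $P_i$, but it says nothing about $\overline{k}$-coefficients, since for $c\in\overline{k}\setminus k$ there is no $t$-module endomorphism inducing multiplication by $c$ on the last coordinate. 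The paper therefore uses Chang's \cite[Thm.~5.4.3]{Cha14} already in the \emph{first} assertion, to pass from $\dim_{\overline{k}}$ to $\dim_k$, and only then invokes Yu (via Lemma~\ref{Lem:Lower_Bound}) and Theorem~\ref{Thm:Linearly_Independence} to bound $\dim_k$ below by the $\mathbb{F}_q[t]$-rank $\ell$. Your proposal reserves Chang's criterion for the ``moreover'' clause, but without it the first conclusion does not follow.
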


    It is worth mentioning that in the special case $\mathbf{f}_i=\alpha_i\in L$, condition (1) in Theorem~\ref{Thm:Intro_Application} is equivalent to saying $\alpha_1,\dots,\alpha_\ell$ are linearly independent over $\mathbb{F}_q$ since $L$ and $\mathbb{F}_q[t]$ are linearly disjoint over $\mathbb{F}_q$. As an application of \cite[Thm.~4.2]{Mis17} (cf. \cite[Thm.~4.5]{CY07} and \cite[Thm.~6.4.2]{Pap08}), Theorem~\ref{Thm:Intro_Application} implies the following algebraic independence result immediately.
    \begin{cor}\label{Cor:Algebraic_Independence}
        Let $L\subset\overline{k}$ be a finite extension of $k$. Let $n_1,\dots,n_d$ be $d$ distinct positive integers, and let $\mathbf{f}_1,\dots,\mathbf{f}_\ell\in L[t]$ with $\|\mathbf{f}_i\|<|\theta|_\infty^{n_jq/(q-1)}$ for each $1\leq i\leq \ell$ and $1\leq j\leq d$. Suppose that $n_j$ is not divisible by $q-1$ for each $1\leq j\leq d$, $n_i/n_j$ is not an integer power of $p$ for each $i\neq j$, and $\mathbf{f}_1,\dots,\mathbf{f}_\ell$ satisfy all the conditions stated in Theorem~\ref{Thm:Intro_Application}. Then
        \[
            \trdeg_{\overline{k}} \overline{k}(\Tilde{\pi},\mathcal{L}_{\mathbf{f}_1,n_1}(\theta),\dots,\mathcal{L}_{\mathbf{f}_\ell,n_1}(\theta),\dots,\mathcal{L}_{\mathbf{f}_1,n_d}(\theta),\dots,\mathcal{L}_{\mathbf{f}_\ell, n_d}(\theta))=d\ell+1.
        \]
    \end{cor}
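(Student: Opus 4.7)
The plan is to reduce Corollary~\ref{Cor:Algebraic_Independence} to an application of Mishiba's algebraic independence criterion \cite[Thm.~4.2]{Mis17} (which is in the spirit of \cite[Thm.~4.5]{CY07} and \cite[Thm.~6.4.2]{Pap08}), using Theorem~\ref{Thm:Intro_Application} as the main input that certifies linear independence at each individual weight $n_j$.

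First, I would fix $j \in \{1,\dots,d\}$ and apply the first assertion of Theorem~\ref{Thm:Intro_Application} with $n = n_j$. Hypotheses (1)--(3) of that theorem are identical to the corresponding hypotheses imposed in the Corollary, and the norm bound $\|\mathbf{f}_i\| < |\theta|_\infty^{n_j q/(q-1)}$ is part of the standing assumption. This yields, for every $j$,
\[
    \dim_{\overline{k}} \Span_{\overline{k}}\{\mathcal{L}_{\mathbf{f}_1,n_j}(\theta), \dots, \mathcal{L}_{\mathbf{f}_\ell,n_j}(\theta)\} = \ell.
\]

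Next, I would invoke Mishiba's criterion. Its upshot is that for a family of Carlitz-polylogarithm type periods $\mathcal{L}_{\mathbf{f}_i, n_j}(\theta)$ organized by weight $n_j$, if (a) the values at each fixed weight are $\overline{k}$-linearly independent, (b) $q-1 \nmid n_j$ for every $j$, and (c) $n_i/n_j \notin p^{\mathbb{Z}}$ whenever $i \neq j$, then $\tilde{\pi}$ together with all the $\mathcal{L}_{\mathbf{f}_i, n_j}(\theta)$ is algebraically independent over $\overline{k}$. Hypothesis (a) is exactly Step~1, while (b) and (c) are assumed in the statement. This would immediately give
\[
    \trdeg_{\overline{k}} \overline{k}(\tilde{\pi}, \mathcal{L}_{\mathbf{f}_i, n_j}(\theta) : 1 \leq i \leq \ell, 1 \leq j \leq d) = d\ell + 1.
\]

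The main obstacle is not the overall strategy but rather the bookkeeping required to cast each $\mathcal{L}_{\mathbf{f}_i, n_j}(\theta)$ as a period of a dual $t$-motive of the precise form that Mishiba's theorem accepts, and to verify that the norm bound $\|\mathbf{f}_i\| < |\theta|_\infty^{n_j q/(q-1)}$ in the hypotheses matches the convergence and positivity conditions built into \cite[Thm.~4.2]{Mis17}. Once this dictionary is in place, the weight-separation conditions on $n_j$ are precisely what prevent non-trivial morphisms between the $\mathbf{C}^{\otimes n_j}$ for distinct $j$ and guarantee that the motivic Galois group of the direct sum factors as the product of the individual Galois groups, delivering the algebraic independence.
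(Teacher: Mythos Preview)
Your overall strategy matches the paper's: reduce to Mishiba's criterion \cite[Thm.~4.2]{Mis17} by verifying, for each fixed weight $n_j$, a linear-independence hypothesis, and then let the conditions $(q-1)\nmid n_j$ and $n_i/n_j\notin p^{\mathbb{Z}}$ do the rest. However, there is a small but genuine gap in how you verify that hypothesis. Mishiba's theorem, as the paper records it, requires that the $\ell+1$ elements
\[
\tilde{\pi}^{n_j},\ \mathcal{L}_{\mathbf{f}_1,n_j}(\theta),\ \dots,\ \mathcal{L}_{\mathbf{f}_\ell,n_j}(\theta)
\]
be linearly independent over $k$, not merely the $\ell$ values $\mathcal{L}_{\mathbf{f}_i,n_j}(\theta)$ among themselves. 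Citing only the first assertion of Theorem~\ref{Thm:Intro_Application} gives you the latter; the condition $(q-1)\nmid n_j$ does not on its own upgrade this to the former, since a priori nothing rules out $\tilde{\pi}^{n_j}$ lying in the $k$-span of the $\mathcal{L}_{\mathbf{f}_i,n_j}(\theta)$.

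The paper closes this gap by going one level deeper: instead of quoting Theorem~\ref{Thm:Intro_Application}, it invokes Theorem~\ref{Thm:Linearly_Independence} (giving $\rank_{\mathbb{F}_q[t]}\Span_{\mathbb{F}_q[t]}\{P_1,\dots,P_\ell\}=\ell$) together with the identity \eqref{Eq:dimension_and_rank},
\[
\dim_k\Span_k\{\tilde{\pi}^{n_j},\mathcal{L}_{\mathbf{f}_1,n_j}(\theta),\dots,\mathcal{L}_{\mathbf{f}_\ell,n_j}(\theta)\}
=1+\rank_{\mathbb{F}_q[t]}\Span_{\mathbb{F}_q[t]}\{P_1,\dots,P_\ell\}=\ell+1,
\]
which is exactly the input Mishiba's criterion demands. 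Once you replace your Step~1 by this slightly stronger statement, your argument and the paper's coincide.
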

    
    %It is known by Chang in \cite[Thm.~5.4.3]{Cha14} that
    %\begin{align*}
    %    \dim_k\Span_k&\{1,\Li_1(\alpha_1),\dots,\Li_1(\alpha_\ell),\dots,\Li_n(\alpha_1),\dots,\Li_n(\alpha_\ell)\}\\
    %    &=1+\sum_{i=1}^n\dim_k\Span_k\{\Li_i(\alpha_1),\dots,\Li_i(\alpha_\ell)\}.
    %\end{align*}
    %Then we could conclude from Theorem~\ref{Thm:Intro_Application} that
    %\[
    %    \dim_k\Span_k\{1,\Li_1(\alpha_1),\dots,\Li_1(\alpha_\ell),\dots,\Li_n(\alpha_1),\dots,\Li_n(\alpha_\ell)\}=n\ell+1.
    %\]
%    \begin{thm}\label{Thm:Intro_Application}
%        Let $L\subset\overline{k}$ be a finite extension of $k$ and $n\in\mathbb{Z}_{>0}$. Let $\alpha_i\in L$ with $|\alpha_i|_\infty<|\theta|_\infty^{q/(q-1)}$ for $1\leq i\leq\ell$. Suppose that
%        \begin{enumerate}
%            \item $\alpha_1,\dots,\alpha_\ell$ are linearly independent over $\mathbb{F}_q$,
%            \item $\mathrm{ord}_w(\alpha_i)>0$ for all $1\leq i\leq\ell$, if $w\in M_L$ and $v\mid\infty$,
%            \item $\mathrm{ord}_w(\alpha_i)\geq 1-q$ for all $1\leq i\leq\ell$, if $w\in M_L$ and $v\nmid\infty$.
%        \end{enumerate}
%        Then
%        \[
%            \dim_k\Span_k\{1,\Li_1(\alpha_1),\dots,\Li_1(\alpha_\ell),\dots,\Li_n(\alpha_1),\dots,\Li_n(\alpha_\ell)\}=n\ell+1.
%        \]
%    \end{thm}

    Let $\epsilon_1,\dots,\epsilon_\ell\in\mathbb{F}_q^\times$ be distinct elements. For each $1\leq i\leq\ell$, consider $\gamma_i$ to be a fixed $(q-1)$-th root of $\epsilon_i$. Then Yeo proved in \cite[Lem.~4.23]{Yeo22} that for any fixed $n\in\mathbb{Z}_{>0}$ and $u_1,\dots,u_\ell\in A$, the set of $\ell$ values $\{\Li_n(\epsilon_1u_1),\dots,\Li_n(\epsilon_\ell u_\ell)\}$ is a $k$-linearly independent set. Due to the condition $(2)$ stated in Theorem~\ref{Thm:Intro_Application}, if we set $L=K(\epsilon_1,\dots,\epsilon_\ell)$, then one sees easily that $\mathrm{ord}_w(\epsilon_iu_i)\leq 0$ for any $u_i\in A$ with $1\leq i\leq\ell$, and $w\in M_L$ with $w\mid\infty$. Thus, Theorem~\ref{Thm:Intro_Application} does not include his result. However, the sufficient conditions stated in Theorem~\ref{Thm:Intro_Application} can be applied to any finite extension of $k$, rather than just the constant field extension $K(\epsilon_1,\dots,\epsilon_\ell)$.
    
    In fact, we also have a parallel $v$-adic version of Theorem~\ref{Thm:Intro_Application} in the following sense. Let $v\in M_k$ be a finite place and $\mathfrak{p}_v\in A$ be the monic irreducible polynomial in $A$ corresponding to $v$. We define the normalized $v$-adic absolute value on $k$ by setting $|\mathfrak{p}_v|_v:=(1/q)^{\deg_{\theta}(\mathfrak{p}_v)}$. Consider the completion $k_v$ of $k$ with respect to $|\cdot|_v$. Let $\mathbb{C}_v$ be the $v$-adic completion of an algebraic closure of $k_v$. Throughout this article, we fix an embedding $\iota_v:\overline{k}\hookrightarrow\mathbb{C}_v$. Note that CPLs converge at $z_0\in\mathbb{C}_v$ with $|z_0|_v<1$ if we regard \eqref{Eq:CPL} as $v$-adic analytic functions on $\mathbb{C}_v$ via the embedding $\iota_v$. Inspired by the work of Anderson and Thakur \cite{AT90}, Chang and Mishiba proposed a way to enlarge the defining domain of $v$-adic CPLs in \cite{CM19} so that $v$-adic CPLs are defined at $z_0\in\mathbb{C}_v$ with $|z_0|_v\leq 1$ (see Section 4.2 for more details). In this case, we denote by $\Li_n(z_0)_v$ its value in $\mathbb{C}_v$. Due to the lack of Chang's result \cite[Thm.~5.4.3]{Cha14} in the $v$-adic setting, we only have the following $v$-adic analogue of Theorem~\ref{Thm:Intro_Application}.

    \begin{thm}\label{Thm:v_adic_Thm}
        Let $L\subset\overline{k}$ be a finite extension of $k$. Let $n\in\mathbb{Z}_{>0}$ and $\alpha_i\in L$ with $|\alpha_i|_v\leq 1$ for $1\leq i\leq \ell$. Suppose that
        \begin{enumerate}
            \item $\alpha_1,\dots,\alpha_\ell$ are linearly independent over $\mathbb{F}_q$,
            \item $\mathrm{ord}_w(\alpha_i)>0$ for all $1\leq i\leq \ell$, if $w\in M_L$ and $w\mid\infty$,
            \item $\mathrm{ord}_w(\alpha_i)\geq 1-q$ for all $1\leq i\leq \ell$, if $w\in M_L$ and $w\nmid\infty$.
        \end{enumerate}
        Then
        \[
            \dim_k\Span_k\{\Li_n(\alpha_1)_v,\dots,\Li_n(\alpha_\ell)_v\}=\ell.
        \]
    \end{thm}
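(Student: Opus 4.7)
I would prove the theorem by converting a hypothetical $k$-linear dependence among the values $\Li_n(\alpha_i)_v$ into an $\mathbb{F}_q[t]$-linear dependence among explicit algebraic points on $\mathbf{C}^{\otimes n}$, and then appealing to the structural results proved earlier in the paper. The bridge is the $v$-adic Anderson--Thakur logarithm identity of Chang--Mishiba, which exhibits $\Li_n(\alpha_i)_v$ as the last coordinate of a $v$-adic logarithm $\log_{\mathbf{C}^{\otimes n},v}(\mathbf{v}_{\alpha_i})$ at an explicit point $\mathbf{v}_{\alpha_i}\in \mathbf{C}^{\otimes n}(L)$ built from $\alpha_i$. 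Hypotheses (2) and (3) are tailored precisely to guarantee that $\mathbf{v}_{\alpha_i}$ lies in the domain of convergence of $\log_{\mathbf{C}^{\otimes n},v}$ and that this identity holds on it.

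\textbf{Steps.} First, I would recall the $v$-adic Anderson--Thakur identity from \cite{CM19} and use hypotheses (2) and (3) to verify that each $\mathbf{v}_{\alpha_i}$ lies in the $v$-adic logarithm domain of $\mathbf{C}^{\otimes n}$, with last coordinate equal to $\Li_n(\alpha_i)_v$. Second, given a putative nontrivial $k$-linear relation $\sum_{i=1}^{\ell} c_i \Li_n(\alpha_i)_v = 0$, I would clear denominators to produce $a_i\in A$, not all zero, and rewrite the relation as the vanishing of a specific $k$-linear form in the coordinates of the logarithm vectors $\log_{\mathbf{C}^{\otimes n},v}(\mathbf{v}_{\alpha_i})$ for $1\le i\le \ell$. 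Third, I would invoke a $v$-adic incarnation of Yu's sub-$t$-module theorem in the dual $t$-motivic framework of \cite{CY07,Pap08} to translate this vanishing into the existence of a proper algebraic sub-$t$-module of $(\mathbf{C}^{\otimes n})^{\ell}$ containing $(\mathbf{v}_{\alpha_1},\ldots,\mathbf{v}_{\alpha_\ell})$. Since $\mathbf{C}^{\otimes n}$ is a simple $t$-module with endomorphism ring $\mathbb{F}_q[t]$, such a sub-$t$-module produces an $\mathbb{F}_q[t]$-linear relation $\sum_{i=1}^\ell [b_i]_n \mathbf{v}_{\alpha_i} = 0$ in $\mathbf{C}^{\otimes n}(L)$ with $b_i\in\mathbb{F}_q[t]$ not all zero. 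Finally, I would apply Theorem~\ref{Thm:Linearly_Independence} (in combination with Theorem~\ref{Thm:Estimate_of_Relations}), together with the observation that $\mathbb{F}_q$-linear independence of the $\alpha_i$ forces $\mathbb{F}_q[t]$-linear independence of the $\mathbf{v}_{\alpha_i}$ by the linear disjointness of $L$ and $\mathbb{F}_q[t]$ over $\mathbb{F}_q$ remarked in the introduction, to reach a contradiction.

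\textbf{Main obstacle.} The most delicate point is the passage from the $k$-linear relation among values to an $\mathbb{F}_q[t]$-linear relation among algebraic points. In the $\infty$-adic setting, Chang's strong transcendence theorem \cite[Thm.~5.4.3]{Cha14} supplies an upgrade all the way to $\overline{k}$-linear independence, which is precisely why Theorem~\ref{Thm:Intro_Application} can conclude $\overline{k}$-independence whereas Theorem~\ref{Thm:v_adic_Thm} is forced to stop at $k$-independence. The $v$-adic substitute must be assembled from Yu's sub-$t$-module theorem applied inside the $v$-adic dual $t$-motivic formalism, and carrying this through carefully --- together with ruling out unwanted sub-$t$-modules of $(\mathbf{C}^{\otimes n})^{\ell}$ by the simplicity of $\mathbf{C}^{\otimes n}$ --- is the technical heart of the argument. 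A subsidiary concern is verifying in detail that the place-by-place growth conditions (2) and (3) suffice to place each $\mathbf{v}_{\alpha_i}$ in the $v$-adic logarithm domain and that the Anderson--Thakur identity holds on it.
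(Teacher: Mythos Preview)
Your overall architecture --- lift a $k$-linear relation among the $\Li_n(\alpha_i)_v$ to an $\mathbb{F}_q[t]$-linear relation among the points $(0,\dots,0,\alpha_i)^{\tr}\in\mathbf{C}^{\otimes n}(L)$ via a Yu-type transcendence result, then contradict Theorem~\ref{Thm:Linearly_Independence} --- is exactly the paper's strategy, packaged there as Lemma~\ref{Lem:Lower_Bound_v} followed by Theorem~\ref{Thm:Linearly_Independence}. However, two points in your proposal are off.

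First, you misidentify the role of hypotheses (2) and (3). They have nothing to do with $v$-adic convergence of $\log_{\mathbf{C}^{\otimes n}}$; they are precisely the hypotheses of Theorem~\ref{Thm:Linearly_Independence} (together with (1), which as you note follows from $\mathbb{F}_q$-independence of the $\alpha_i$ and linear disjointness of $L$ and $\mathbb{F}_q[t]$ over $\mathbb{F}_q$). The $v$-adic convergence is handled entirely by the separate assumption $|\alpha_i|_v\le 1$, and even then the point $(0,\dots,0,\alpha_i)^{\tr}$ need not lie in the convergence domain: Chang--Mishiba's definition chooses $a_i\in A$ so that $[a_i(t)]_n(0,\dots,0,\alpha_i)^{\tr}$ does, and one works with those translated points. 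Since $[a_i(t)]_n$ is injective on $\mathbf{C}^{\otimes n}(L)\otimes_{\mathbb{F}_q[t]}\mathbb{F}_q(t)$, passing back to the original points costs nothing in the rank comparison.

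Second, you are reaching for heavier machinery than needed. The paper does not invoke a $v$-adic sub-$t$-module theorem or the Papanikolas/ABP framework; it uses directly the $v$-adic case of Yu's theorem \cite[Thm.~3.7]{Yu91}, which says that if $Y\in\Lie\mathbf{C}^{\otimes n}(\mathbb{C}_v)$ has $\exp_{\mathbf{C}^{\otimes n}}(Y)\in\mathbf{C}^{\otimes n}(\overline{k})$ and the last coordinate $y_n$ of $Y$ vanishes, then $Y=0$. Applied to $Y=\sum_i\partial[c_i(t)]_n\log_{\mathbf{C}^{\otimes n}}([a_i(t)]_n(0,\dots,0,\alpha_i)^{\tr})$, this forces $\sum_i[c_i(t)a_i(t)]_n(0,\dots,0,\alpha_i)^{\tr}=0$ in one step, without any classification of sub-$t$-modules of $(\mathbf{C}^{\otimes n})^\ell$. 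Your route via simplicity of $\mathbf{C}^{\otimes n}$ would also work in principle, but the $v$-adic sub-$t$-module theorem you allude to is not supplied by \cite{CY07,Pap08}, which are $\infty$-adic.
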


    We end up this subsection with the following few remarks. As an analogue of the classical multiple polylogarithms, for $\mathfrak{s}=(s_1,\dots,s_r)\in(\mathbb{Z}_{>0})^r$, Chang introduced the $\mathfrak{s}$-th Carlitz multiple polylogarithm (CMPL) in \cite{Cha14}, which is defined as follows:
    \begin{equation}\label{Eq:CMPL}
        \Li_{\mathfrak{s}}(z_1,\dots,z_r):=\underset{i_1>\cdots>i_r\geq 0}{\sum}\frac{z_1^{q^{i_1}}\cdots z_r^{q^{i_r}}}{L_{i_1}^{s_1}\cdots L_{i_r}^{s_r}}\in k\llbracket z_1,\dots,z_r\rrbracket.
    \end{equation}
    Linear relations among CMPLs at algebraic points have been studied in \cite{Cha14,CPY19,CH21,CCM22b,IKLNDP22}. 
    But none of them gives explicit sufficient conditions on algebraic points so that the special values of CMPLs with fixed $\mathfrak{s}$ at these points are linearly independent over $k$. It is natural to ask whether we have generalizations of Theorem~\ref{Thm:Intro_Application} for CMPLs at algebraic points. We hope to tackle this problem in the near future.

    {At the time of writing this paper}, there {have been} some  developments concerning similar topics about linear relations among CPLs at algebraic points. For example, using the theory of non-commutative factorization, an abundant family of \emph{Eulerian type} relations involving CPLs at {algebraic points} {have been} constructed explicitly in \cite[Thm.~C]{Pel23}. On the other hand, it has been shown in \cite[Thm.~A]{GM22} that several linear relations and linearly independence results about CPLs at integral points in $A$ can be explained by the integral motivic cohomology theory introduced {in} \cite{Gaz24}. It would be interesting to compare these results with our approaches presented in this {paper}.

\subsection{Organization}
    The organization of the present article is given as follows. In Section 2, we first follow closely the exposition in \cite{NP21} to review the theory of $t$-modules and Anderson dual $t$-motives. Then we review the essential properties of tensor powers of the Carlitz module. In Section 3, we first establish the equivalence between the existence of $\mathbb{F}_q[t]$-linear relations among algebraic points on tensor powers of the Carlitz module and the existence of solutions of an explicitly constructed Frobenius difference equation. Then we give a proof of Theorem~\ref{Thm:Estimate_of_Relations}, an analogue of Masser's result for tensor powers of the Carlitz module. In Section 4, we recall some results related to CPLs for our later use. Then as an application of the techniques we develop in Section 3, we prove Theorem \ref{Thm:Intro_Application}, Corollary \ref{Cor:Algebraic_Independence}, and Theorem \ref{Thm:v_adic_Thm}. Then we derive a concrete sufficient condition for CPLs at algebraic points to be linearly independent over $k$ in both $\infty$-adic and $v$-adic settings.

\section{Preliminaries}
\subsection{Notation}
    \begin{itemize}
		\setlength{\leftskip}{0.8cm}
        \setlength{\baselineskip}{18pt}
		\item[$\mathbb{F}_q$] :=  A fixed finite field with $q$ elements, for $q$ a power of a prime number $p$.
		\item[$\infty$] :=  A fixed closed point on the projective line $\mathbb{P}^1(\mathbb{F}_q)$.
		\item[$A$] :=  $\mathbb{F}_q[\theta]$, the regular functions of $\mathbb{P}^1$ away from $\infty$.
		\item[$k$] :=  $\mathbb{F}_q(\theta)$, the function field of $\mathbb{P}^1$.
		\item[$k_\infty$] :=  The completion of $k$ at the place $\infty$.
		\item[$\mathbb{C}_\infty$] :=  The completion of a fixed algebraic closure of $k_\infty$.
		\item[$\overline{k}$] := A fixed algebraic closure of $k$ with a fixed embedding into $\mathbb{C}_\infty$.
        \item[$L$]:= A finite extension of $k$. 
        \item[$M_L$] := The set of all places in $L$. 
        \item[$\mathcal{L}(D)$]:=$\{f\in L^{\times}\mid\dv(f)+D\geq 0 \}\cup\{0\}$, the Riemann-Roch space for a divisor $D$ of $L$. 
	\end{itemize}

\subsection{Anderson $t$-modules and dual $t$-motives}

    In this section, we recall the notion of Anderson $t$-modules \cite{And86} and dual $t$-motives \cite[Definition~4.4.1]{ABP04}. For further information on these objects, one can consult \cite{BP20,HJ20,NP21}.
    For $n\in\mathbb{Z}$ and the field of Laurent series $\mathbb{C}_{\infty}(\!(t)\!)$, we define the $n$-fold Frobenius twisting by
    \begin{equation}\label{Eq:Frobenius_twisting}
        \begin{split}
            \mathbb{C}_{\infty}(\!(t)\!)&\rightarrow\mathbb{C}_{\infty}(\!(t)\!)\\
            f:=\sum_{i}a_it^i&\mapsto \sum_{i}a_i^{q^{n}}t^i=:f^{(n)}.
        \end{split}
    \end{equation}
    We denote by $\overline{k}[t, \sigma]$ the non-commutative $\overline{k}[t]$-algebra generated by $\sigma$ subject to the following relation:
    \[
        \sigma f=f^{(-1)}\sigma, \quad f\in\overline{k}[t].
    \]
    Note that $\overline{k}[t,\sigma]$ contains $\overline{k}[t]$, $\overline{k}[\sigma]$, and its center is $\mathbb{F}_q[t]$. Now we recall the notion of Anderson dual $t$-motives.
    
    \begin{defn}[{\cite[Section 4.4.1]{ABP04}}]
        An \emph{Anderson dual $t$-motive} is a left  $\overline{k}[t, \sigma]$-module $\mathcal{M}$ such that
        \begin{itemize}
            \item[(i)] $\mathcal{M}$ is a free left $\overline{k}[t]$-module of finite rank.
            \item[(ii)] $\mathcal{M}$ is a free left $\overline{k}[\sigma]$-module of finite rank.
            \item[(iii)] $(t-\theta)^n \mathcal{M}\subset \sigma \mathcal{M}$ for any sufficiently large integer $n$.
        \end{itemize}
    \end{defn}
    
    We give the following fundamental example of Anderson dual $t$-motives which plays a crucial role in our study of the tensor powers of the Carlitz module.

    \begin{eg}\label{Ex:Carlitz_Tensor_Powers_1}
        Let $n\in\mathbb{Z}_{>0}$. Then the $n$-th tensor power of the Carlitz motive is defined by $C^{\otimes n}=\overline{k}[t]$ with the $\sigma$-action  given by $\sigma f=f^{(-1)}(t-\theta)^n$ for each $f\in\overline{k}[t]$. Note that $C^{\otimes n}$ is a dual $t$-motive, the set $\{1\}$ forms a $\overline{k}[t]$-basis of $C^{\otimes n}$, and the set $\{(t-\theta)^{n-1},\dots,(t-\theta),1\}$ forms a $\overline{k}[\sigma]$-basis of $C^{\otimes n}$.
    \end{eg}
    
    Let $R$ be an $\mathbb{F}_q$-algebra and $\tau:=(x\mapsto x^{q}):R\to R$ be the Frobenius $q$-th power operator. We set $R[\tau]$ to be the twisted polynomial ring in $\tau$ over $R$ subject to the relation $\tau\alpha=\alpha^q\tau$ for $\alpha\in R$. Now we recall the notion of Anderson $t$-modules.
    \begin{defn}[{\cite[Section 1.1]{And86}}]
        Let $L\subset\overline{k}$ be a field containing $A$ and $d\in\mathbb{Z}_{>0}$. A $d$-dimensional $t$-module defined over $L$ is a pair $E=(\mathbb{G}_a^d,\rho)$ where $\mathbb{G}_a^d$ is the $d$-dimensional additive group scheme over $L$ and $\rho$ is an $\mathbb{F}_q$-algebra homomorphism
        \begin{align*}
            \rho:\mathbb{F}_q[t]&\rightarrow \Mat_d(L[\tau])\\
                    a&\mapsto \rho_{a}
        \end{align*}
        such that $\partial\rho_t-\theta I_d$ is a nilpotent matrix. Here, for $a\in\mathbb{F}_q[t]$ we define $\partial\rho_a:=\alpha_0$ whenever $\rho_a=\alpha_0+\sum_{i\geq 1}\alpha_i\tau^i$ for $\alpha_i\in\Mat_d(L)$.
    \end{defn}
    Let $F$ be a subfield of $\overline{k}$ containing $L$. Then we denote by $E(F)$ the $F$-valued points of the Anderson $t$-module $E$ defined over $L$. More precisely, it is a pair $(\mathbb{G}_a^d(F),\rho)$ of the $F$-valued points of $\mathbb{G}_a^d$ and the homomorphism $\rho$ defined over $L$. Given a $d$-dimensional Anderson $t$-module $E=(\mathbb{G}_a^d, \rho)$ over $L$, Anderson \cite{And86} (see \cite[Lem.~5.9.3]{Gos96} and \cite[Rem.~2.2.3]{NP21} for related discussions) showed that there {exists a unique} $d$-variable $\mathbb{F}_q$-linear power series 
    %{\color{red} Rewrite this part.}
    %\[
    %    \exp_E\in\overline{k_\infty}\llbracket z_1,\dots,z_d\rrbracket^d
    %\]
    %so that
    \[
        \exp_E\begin{pmatrix}
            z_1\\
            \vdots\\
            z_d
        \end{pmatrix}=\begin{pmatrix}
            z_1\\
            \vdots\\
            z_d
        \end{pmatrix}+\sum_{i\geq 1}Q_i\begin{pmatrix}
            z_1^{q^i}\\
            \vdots\\
            z_d^{q^i}
        \end{pmatrix},~Q_i\in\Mat_{d}(L)
    \]
    {such that,} as formal power series, the following identity holds for any $a\in\mathbb{F}_q[t]$
    \[
        \rho_a\circ \exp_E=\exp_E\circ \partial\rho_a.
    \]
    The power series $\exp_E$ is called the \emph{exponential map} of $E$. 
    %We mention that the exponential map $\exp_E$ of $E$ is in fact defined over $L$, namely $\exp_E\in L\llbracket z_1,\dots,z_d\rrbracket^d$. This is due to Anderson \cite[Prop.~2.1.4,~Lem.~2.1.6]{And86} and noted by Goss \cite[Lem.~5.9.3]{Gos96} (see also \cite[Rem.~2.2.3]{NP21}). 
    The kernel of the exponential map $\exp_E$ is called the \emph{period lattice} of $E$ and is denoted by $\Lambda_E$. 
    %If $\exp_E$ is a surjective map on $\mathbb{C}_\infty^d$, then $E$ is called \emph{uniformizable}.
    The formal inverse of $\exp_E$ is denoted by $\log_E$ and is called the \emph{logarithm map} of $E$. As a formal power series, $\log_E$ is of the form
    \[
        \log_E\begin{pmatrix}
            z_1\\
            \vdots\\
            z_d
        \end{pmatrix}=\begin{pmatrix}
            z_1\\
            \vdots\\
            z_d
        \end{pmatrix}+\sum_{i\geq 1}P_i\begin{pmatrix}
            z_1^{q^i}\\
            \vdots\\
            z_d^{q^i}
        \end{pmatrix},~P_i\in\Mat_{d}(L).
    \]
    Moreover, as formal power series, the following identity holds for any $a\in\mathbb{F}_q[t]$
    \[
        \log_E\circ\rho_a=\partial\rho_a\circ\log_E. 
    \]
    
    In what follows, we explain how to associate a $\overline{k}[t,\sigma]$-module $\mathcal{M}_E$ for {a} given $t$-module $E=(\mathbb{G}_a^d,\rho)$ over $L$. We set $\mathcal{M}_E:=\Mat_{1\times d}(\overline{k}[\sigma])$. It naturally has a left $\overline{k}[\sigma]$-module structure. The left $\overline{k}[t]$-module {structure} of $\mathcal{M}_E$ is given {in the following way}: for each $m\in\mathcal{M}_E$, we define
    \begin{equation}
        tm:=m\rho_t^\star.
    \end{equation}
    Here we define
    \begin{equation}
        \rho_t^\star:=\alpha_0^\tr+(\alpha_1^{(-1)})^\tr\sigma+\cdots+(\alpha_s^{(-r)})^\tr\sigma^s
    \end{equation}
    whenever $\rho_t=\alpha_0+\sum_{i=1}^s\alpha_i\tau^i$ with $\alpha_i\in\Mat_d(L)$. It is clear that $\mathcal{M}_E$ is free of rank $d$ over $\overline{k}[\sigma]$ and a straightforward computation shows that
    \[
        (t-\theta)^d\mathcal{M}_E\subset\sigma\mathcal{M}_E.
    \]
    If $\mathcal{M}_E$ is free of finite rank over $\overline{k}[t]$, namely it defines an Anderson dual $t$-motive, then the $t$-module $E$ is called \emph{$t$-finite}. In this case, we define $r:=\rank_{\overline{k}[t]}\mathcal{M}_E$ to be the rank of the dual $t$-motive $\mathcal{M}_E$.
    
    We now explain how to recover a $t$-finite $t$-module $E=(\mathbb{G}_a^d,\rho)$ from its associated Anderson dual $t$-motive $\mathcal{M}_E=\Mat_{1\times d}(\overline{k}[\sigma])$. Let $m=\sum_{i=0}^{n}\mathbf{\alpha}_i\sigma^i\in\mathcal{M}_E$ with $\mathbf{\alpha}_i\in\Mat_{1\times d}(\overline{k})$. Then we define
    \[
        \epsilon_0(m):=\alpha_0^{\tr},~
        \epsilon_1(m):=\left(\sum_{i=0}^n\mathbf{\alpha}_i^{(i)}\right)^\tr\in\overline{k}^d.
    \]
    Note that $\epsilon_0:\mathcal{M}_E\to\overline{k}^d$ is a $\overline{k}$-linear map and $\epsilon_1:\mathcal{M}_E\to\overline{k}^d$ is an $\mathbb{F}_q$-linear map. We have the following lemma due to Anderson.
    
    \begin{lem}[Anderson,~{\cite[Prop.~2.5.8]{HJ20}},~{\cite[Lem.~3.1.2]{NP21}}]\label{Lem:DtMotives_to_tModules}
        For any $a\in\mathbb{F}_q[t]$, we have the following commutative diagrams with exact rows:
        \[
        \begin{tikzcd}
            0 \arrow[r] & \mathcal{M}_E \arrow[d, "a(\cdot)"] \arrow[r, "\sigma(\cdot)"] & \mathcal{M}_E \arrow[d, "a(\cdot)"] \arrow[r, "\epsilon_0"] & \overline{k}^d \arrow[d, "\partial\rho_a(\cdot)"] \arrow[r] & 0 \\
            0 \arrow[r] & \mathcal{M}_E \arrow[r, "\sigma(\cdot)"] & \mathcal{M}_E \arrow[r, "\epsilon_0"] & \overline{k}^d \arrow[r] & 0
        \end{tikzcd}
        \]
        and
        \[
        \begin{tikzcd}
            0 \arrow[r] & \mathcal{M}_E \arrow[d, "a(\cdot)"] \arrow[r, "(\sigma-1)(\cdot)"] & \mathcal{M}_E \arrow[d, "a(\cdot)"] \arrow[r, "\epsilon_1"] & \overline{k}^d \arrow[d, "\rho_a(\cdot)"] \arrow[r] & 0 \\
            0 \arrow[r] & \mathcal{M}_E \arrow[r, "(\sigma-1)(\cdot)"] & \mathcal{M}_E \arrow[r, "\epsilon_1"] & \overline{k}^d \arrow[r] & 0.
        \end{tikzcd}
        \]
        In particular, $\epsilon_0$ and $\epsilon_1$ induce isomorphisms of $\overline{k}[t]$-modules and $\mathbb{F}_q[t]$-modules respectively:
        \[
            \epsilon_0:\mathcal{M}_E/\sigma\mathcal{M}_E\cong \Lie(E)(\overline{k}),~
            \epsilon_1:\mathcal{M}_E/(\sigma-1)\mathcal{M}_E\cong E(\overline{k})
        \]
        where {by} $\Lie(E)(\overline{k})$ we mean the Lie algebra $\Lie\mathbb{G}_a^d(\overline{k})=\overline{k}^d$ equipped with the $\mathbb{F}_q[t]$-module structure coming from $(\partial\rho:=a\mapsto\partial\rho_a):\mathbb{F}_q[t]\to\Mat_{d}(\overline{k})$.
    \end{lem}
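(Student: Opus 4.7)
The plan is to treat the two statements in parallel, reducing each to three points: the surjectivity of $\epsilon_i$, the identification of its kernel, and the equivariance with respect to the $\mathbb{F}_q[t]$-action. Since $\mathbb{F}_q[t]$ is generated by $t$, it suffices to handle the action of $t$ for the last point. First I would check that $t$ and $\sigma$ commute as operators on $\mathcal{M}_E$, because left multiplication by $\sigma$ and right multiplication by $\rho_t^{\star} \in \Mat_d(\overline{k}[\sigma])$ commute; consequently $\sigma$ and $\sigma - 1$ descend to $\mathbb{F}_q[t]$-linear endomorphisms and each quotient inherits a well-defined $\mathbb{F}_q[t]$-module structure. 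This also delivers the commutativity of the left-hand squares in both diagrams.

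The case of $\epsilon_0$ is essentially formal. The submodule $\sigma \mathcal{M}_E$ consists exactly of those $m = \sum_i \alpha_i \sigma^i$ with $\alpha_0 = 0$, so $\epsilon_0$ induces a $\overline{k}$-linear isomorphism $\mathcal{M}_E / \sigma \mathcal{M}_E \cong \overline{k}^d$, with surjectivity witnessed by the lift $v \mapsto v^{\tr} \in \Mat_{1 \times d}(\overline{k}) \subset \mathcal{M}_E$. For $t$-equivariance, taking a representative $m = v \in \Mat_{1 \times d}(\overline{k})$, the coefficient of $\sigma^0$ in $tm = v \rho_t^{\star}$ is $v (\partial \rho_t)^{\tr}$, whose transpose is $\partial \rho_t \cdot \epsilon_0(m)$. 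This identifies $\epsilon_0 : \mathcal{M}_E / \sigma \mathcal{M}_E \cong \Lie(E)(\overline{k})$ as $\mathbb{F}_q[t]$-modules.

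For $\epsilon_1$ I would first record the easy inclusion $(\sigma - 1) \mathcal{M}_E \subset \ker \epsilon_1$ via the direct computation $\epsilon_1(\sigma m) = \bigl( \sum_i (\alpha_i^{(-1)})^{(i+1)} \bigr)^{\tr} = \bigl( \sum_i \alpha_i^{(i)} \bigr)^{\tr} = \epsilon_1(m)$. The reverse inclusion is the main content: given $m = \sum_{i = 0}^{D} \alpha_i \sigma^i$ with $\sum_i \alpha_i^{(i)} = 0$, I would seek $m' = \sum_j \beta_j \sigma^j$ with $(1 - \sigma) m' = m$ by matching $\sigma^j$-coefficients, yielding the recursion $\beta_0 = \alpha_0$ and $\beta_j = \alpha_j + \beta_{j-1}^{(-1)}$ with closed form $\beta_j = \sum_{i = 0}^{j} \alpha_i^{(i - j)}$. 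The hypothesis $\sum_i \alpha_i^{(i)} = 0$ is precisely what forces $\beta_j = 0$ for $j \geq D$, so $m' := \sum_{j = 0}^{D-1} \beta_j \sigma^j$ is a well-defined finite sum in $\mathcal{M}_E$ solving $(1 - \sigma) m' = m$; this termination step, which is where the kernel condition is actually used, is the one substantive point in the proof, everything else being formal bookkeeping with Frobenius twists. Surjectivity of $\epsilon_1$ follows from the constant lift as before. Finally, for $t$-equivariance I would take a constant representative $m = v \in \Mat_{1 \times d}(\overline{k})$, write $\rho_t = \alpha_0 + \sum_{i \geq 1} \alpha_i \tau^i$ with $\alpha_i \in \Mat_d(\overline{k})$ the matrix coefficients in this sub-computation, and compute $\epsilon_1(tv) = \sum_i \alpha_i (v^{\tr})^{(i)} = \rho_t \cdot \epsilon_1(m)$ via the identity $\bigl( v (\alpha_i^{(-i)})^{\tr} \bigr)^{(i)} = v^{(i)} \alpha_i^{\tr}$ and the convention that $\tau$ acts on column vectors in $E(\overline{k})$ by the Frobenius twist.
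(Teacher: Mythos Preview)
Your argument is correct. Note, however, that the paper does not supply its own proof of this lemma: it is stated with attribution to Anderson and citations to \cite[Prop.~2.5.8]{HJ20} and \cite[Lem.~3.1.2]{NP21}, and then used as a black box. Your direct verification---identifying $\ker\epsilon_0=\sigma\mathcal{M}_E$ by inspection, and $\ker\epsilon_1=(\sigma-1)\mathcal{M}_E$ via the recursion $\beta_j=\sum_{i\le j}\alpha_i^{(i-j)}$ together with the termination $\beta_j=\bigl(\sum_i\alpha_i^{(i)}\bigr)^{(-j)}=0$ for $j\ge D$---is the standard computation and matches what appears in the cited sources. Two minor points you leave implicit: exactness at the leftmost term also requires the injectivity of $\sigma$ and of $\sigma-1$ on $\mathcal{M}_E=\Mat_{1\times d}(\overline{k}[\sigma])$, which follows immediately by comparing lowest $\sigma$-degrees; and your $t$-equivariance check for $\epsilon_1$ is done only on constant representatives, which is legitimate because you have already shown that every $m$ differs from a constant by an element of $(\sigma-1)\mathcal{M}_E$, and $t$ commutes with $\sigma-1$.
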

    
 %   Now we recall the notion of $t$-frame. Let $\{m_1,\dots,m_r\}$ be a $\overline{k}[t]$-basis of $\mathcal{M}_E$. Then there is an unique matrix $\Phi_E\in\Mat_r(\overline{k}[t])$ such that
  %  \[
   %     %\sigma(m_1,\dots,m_r)^\tr=\Phi_E(m_1,\dots,m_r)^\tr.
    %\]
    %Now we define a map
    %\begin{align*}
     %   \iota:\Mat_{1\times r}(\overline{k}[t])&\to\mathcal{M}_E\\
     %   (a_1,\dots,a_r)&\mapsto a_1m_1+\cdots+a_rm_r.
    %\end{align*}
    %We call the pair $(\iota,\Phi_E)$ a $t$-frame of $E$. Note that for any $(a_1,\dots,a_r)\in\Mat_{1\times r}(\overline{k}[t])$, we have
    %\begin{align*}
        %\sigma\bigl(\iota(a_1,\dots,a_r)\bigr)&=\sigma(a_1,\dots,a_r)(m_1,\dots,m_r)^\tr\\
        %&=(a_1^{(-1)},\dots,a_r^{(-1)})\sigma(m_1,\dots,m_r)^\tr\\
        %&=(a_1^{(-1)},\dots,a_r^{(-1)})\Phi_E(m_1,\dots,m_r)^\tr\\
        %&=\iota\bigl((a_1^{(-1)},\dots,a_r^{(-1)})\Phi_E\bigr).
    %\end{align*}

\subsection{Tensor powers of the Carlitz module}
    In what follows, we recall the definition of the tensor powers of the Carlitz module.
    
    \begin{defn}\label{Def:Carlitz_Tensor_Powers}
        Let $n\in\mathbb{Z}_{>0}$. Then the $n$-th tensor power of the Carlitz module is the $t$-module given by the pair $\mathbf{C}^{\otimes n}:=(\mathbb{G}_a^n,[\cdot]_n)$, where $[\cdot]_n$ is uniquely determined by
        \[
            [t]_n:=
            \begin{pmatrix}
            \theta & 1 & &  \\
            & \theta & \ddots & \\
            & & \ddots & 1 \\
            \tau & & & \theta
            \end{pmatrix}\in\Mat_n(k[\tau]).
        \]
    \end{defn}
    
    Note that the associated $\overline{k}[t,\sigma]$-module is $\mathcal{M}_{\mathbf{C}^{\otimes n}}=\Mat_{1\times n}(\overline{k}[\sigma])$ whose $t$-action on the element $m\in\mathcal{M}_{\mathbf{C}^{\otimes n}}$ is given by
        \[
            tm=m[t]_n^\star=m(
            \begin{pmatrix}
            \theta & & & \\
            1 & \theta & & \\
             & \ddots & \ddots & \\
             & & 1 & \theta
            \end{pmatrix}+
            \begin{pmatrix}
            0 & \cdots & 0 & 1 \\
             & \ddots & & 0 \\
             & & \ddots & \vdots \\
             & & & 0
            \end{pmatrix}\sigma
            ).
        \]
    Let $\{e_i\}_{i=1}^n$ be the standard $\overline{k}[\sigma]$-basis of $\mathcal{M}_{\mathbf{C}^{\otimes n}}$. Recall from Example~\ref{Ex:Carlitz_Tensor_Powers_1} that $C^{\otimes n}=\overline{k}[t]$ is the $\overline{k}[t,\sigma]$-module with $\sigma$-action given by $\sigma f=f^{(-1)}(t-\theta)^n$ for each $f\in\overline{k}[t]$. Since the set $\{(t-\theta)^{n-1},\dots,(t-\theta),1\}$ forms a $\overline{k}[\sigma]$-basis of $C^{\otimes n}$, for $1\leq i\leq n$, the assignment
    \begin{equation}\label{Eq:Identification}
        \begin{split}
            \Theta:\mathcal{M}_{\mathbf{C}^{\otimes n}}&\to C^{\otimes n}\\
        e_i&\mapsto (t-\theta)^{n-i}
        \end{split}
    \end{equation}
    %$\Theta(e_i):=(t-\theta)^{n-i}$ for $1\leq i\leq n$ 
    induces an isomorphism of $\overline{k}[\sigma]$-modules between $\mathcal{M}_{\mathbf{C}^{\otimes n}}$ and the dual $t$-motive $C^{\otimes n}$ we introduced in Example \ref{Ex:Carlitz_Tensor_Powers_1}. In fact, it is straightforward to verify that $\Theta$ is an isomorphism of $\overline{k}[t,\sigma]$-modules, and thus $\mathbf{C}^{\otimes n}$ is $t$-finite.

    One can show that
    %Here we use the fact (See the proof of \cite[Thm. 5.2.1]{CPY19}) that 
    every element $\mathbf{f}\in C^{\otimes n}/(\sigma-1)C^{\otimes n}$ admits the unique expression
    \[
        \mathbf{f}=p_0+p_1(t-\theta)+\cdots+p_{n-1}(t-\theta)^{n-1}+(\sigma-1)C^{\otimes n}\in C^{\otimes n}/(\sigma-1)C^{\otimes n}
    \]
    where $p_i\in\overline{k}$ for $0\leq i\leq n-1$.
    Then according to Lemma~\ref{Lem:DtMotives_to_tModules}, the composition $\epsilon_1\circ\Theta^{-1}$ defines an isomorphism of $\mathbb{F}_q[t]$-modules
    \begin{equation}\label{Eq:Explicit_Correspondence}
        \begin{split}
            (\epsilon_1\circ\Theta^{-1}):C^{\otimes n}/(\sigma-1)C^{\otimes n}&\to\mathbf{C}^{\otimes n}(\overline{k})\\
            p_0+p_1(t-\theta)+\cdots+p_{n-1}(t-\theta)^{n-1}&\mapsto (p_{n-1},\dots,p_0)^\tr.
        \end{split}
    \end{equation}
    An immediate consequence is the following result which gives a collection of generators of $\mathbf{C}^{\otimes n}(L)$. It is also crucial for the later study of the relations between CPLs and the logarithm of $\mathbf{C}^{\otimes n}$.

    \begin{lem}\label{Lem:Generators_of_Carlitz_Tensor_Powers}
        Let $n\in\mathbb{Z}_{>0}$. Let $L\subset\overline{k}$ be a finite extension of $k$. For each $P=(p_{n-1},\dots,p_0)^\tr\in\mathbf{C}^{\otimes n}(L)$, we have
        \[
            P=\sum_{i=0}^{n-1}[t^i]_n(0,\dots,0,f_i)^{\mathrm{tr}}
        \]
        where $f_i=\sum_{j=i}^{n-1}p_j\binom{j}{i}(-\theta)^{j-i}$ for each $0\leq i\leq n-1$.
        In particular, we have
        \[
            \mathbf{C}^{\otimes n}(L)=\mathrm{Span}_{\mathbb{F}_q[t]}\{(0,\dots,0,\alpha)^{\mathrm{tr}}\mid\alpha\in L\}.
        \]
    \end{lem}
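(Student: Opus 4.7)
The plan is to transfer the identity to the dual $t$-motive side via the isomorphism $(\epsilon_1\circ\Theta^{-1}): C^{\otimes n}/(\sigma-1)C^{\otimes n}\xrightarrow{\sim}\mathbf{C}^{\otimes n}(\overline{k})$ described just before the statement, and then verify it as a polynomial identity in $\overline{k}[t]$. Under this isomorphism, $P=(p_{n-1},\dots,p_0)^{\mathrm{tr}}$ corresponds to the class of $p_0+p_1(t-\theta)+\cdots+p_{n-1}(t-\theta)^{n-1}$, the $\mathbb{F}_q[t]$-module structure on the left-hand side turns the operator $[a]_n$ into multiplication by $a$, and a point of the form $(0,\dots,0,\alpha)^{\mathrm{tr}}$ corresponds to the constant $\alpha\in\overline{k}\subset C^{\otimes n}$. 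Thus the assertion becomes the identity
\[
\sum_{j=0}^{n-1} p_j (t-\theta)^j \;=\; \sum_{i=0}^{n-1} t^i f_i \pmod{(\sigma-1)C^{\otimes n}}
\]
with $f_i=\sum_{j=i}^{n-1}p_j\binom{j}{i}(-\theta)^{j-i}$.

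Since both sides, when expanded in powers of $s:=t-\theta$, are polynomials of degree at most $n-1$, no reduction modulo $(\sigma-1)C^{\otimes n}$ is needed and it suffices to check the equality in $\overline{k}[t]$ itself. I would write $t^i=(s+\theta)^i=\sum_{k=0}^{i}\binom{i}{k}\theta^{i-k} s^k$, substitute, and collect powers of $s$, reducing the problem to proving that for every $0\le k\le n-1$,
\[
p_k \;=\; \sum_{i=k}^{n-1} \binom{i}{k}\theta^{i-k} f_i.
\]

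The remaining step is a routine binomial manipulation: substituting the definition of $f_i$ and swapping the order of summation yields $\sum_{j=k}^{n-1} p_j \binom{j}{k}\sum_{i=k}^{j}\binom{j-k}{i-k}\theta^{i-k}(-\theta)^{j-i}$ via the identity $\binom{i}{k}\binom{j}{i}=\binom{j}{k}\binom{j-k}{i-k}$, and the inner sum collapses by the binomial theorem to $(\theta+(-\theta))^{j-k}=\delta_{j,k}$, leaving exactly $p_k$. I do not expect any genuine obstacle here; the only thing to be mindful of is keeping track of the correspondence between coordinate indices on the $t$-module side $(p_{n-1},\dots,p_0)^{\mathrm{tr}}$ and the basis $\{(t-\theta)^{n-i}\}$ used to define $\Theta$, which dictates that the constant term $p_0$ of the dual-motive representative sits in the last coordinate on the $t$-module side.

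For the ``in particular'' assertion, each $f_i$ is an $\mathbb{F}_p$-linear combination of the $p_j$ with coefficients in $A\subset L$, so $f_i\in L$ whenever $p_j\in L$; therefore each $P\in\mathbf{C}^{\otimes n}(L)$ is a sum of $[t^i]_n$ applied to points of the form $(0,\dots,0,f_i)^{\mathrm{tr}}$ with $f_i\in L$, establishing the claimed span.
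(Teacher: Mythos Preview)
Your proposal is correct and follows essentially the same route as the paper: both arguments push the identity through the isomorphism $(\epsilon_1\circ\Theta^{-1})$ and reduce it to the polynomial equality $\sum_j p_j(t-\theta)^j=\sum_i f_i t^i$ in $L[t]$. The only difference is cosmetic: the paper verifies this by expanding $(t-\theta)^j$ in powers of $t$ (which yields the formula for $f_i$ directly), whereas you expand $t^i$ in powers of $t-\theta$ and then invoke a binomial inversion---correct, but a slightly longer path to the same point.
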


    \begin{proof}
        Consider
        \[
            \mathbf{f}_P:=p_0+p_1(t-\theta)+\cdots+p_{n-1}(t-\theta)^{n-1}\in L[t].
        \]
        Note that $\mathbf{f}_P$ can be expressed as
        \[
            \mathbf{f}_P=f_0+f_1t+\cdots+f_{n-1}t^{n-1}\in L[t],
        \]
        where $f_i=\sum_{j=i}^{n-1}p_j\binom{j}{i}(-\theta)^{j-i}\in L$ for each $0\leq i\leq n-1$. On the one hand, we have
        \[
            \left(\epsilon_1\circ\Theta^{-1}\right)(\mathbf{f}_P)=\left(\epsilon_1\circ\Theta^{-1}\right)(p_0+p_1(t-\theta)+\cdots+p_{n-1}(t-\theta)^{n-1})=P.
        \]
        On the other hand, we have
        \begin{align*}
            \left(\epsilon_1\circ\Theta^{-1}\right)(\mathbf{f}_P)&=\left(\epsilon_1\circ\Theta^{-1}\right)(f_0+f_1t+\cdots+f_{n-1}t^{n-1})\\
            &=\sum_{i=0}^{n-1}[t^i]_n\left(\epsilon_1\circ\Theta^{-1}\right)(f_i)\\
            &=\sum_{i=0}^{n-1}[t^i]_n(0,\dots,0,f_i)^{\mathrm{tr}}.
        \end{align*}
        Here the second equality comes from the fact that $\epsilon_1\circ\Theta^{-1}$ is $\mathbb{F}_q[t]$-linear and the last equation follows by \eqref{Eq:Explicit_Correspondence} that $\left(\epsilon_1\circ\Theta^{-1}\right)(f_i)=(0,\dots,0,f_i)^{\mathrm{tr}}$. The desired result now follows.
    \end{proof}

%\begin{rem}
%    With Lemma~\ref{Lem:Generators_of_Carlitz_Tensor_Powers} in hand, to study $\mathbb{F}_q[t]$-linear relations among algebraic points on $\mathbf{C}^{\otimes n}$, it suffices to study points of the form $(0,\dots,0,\alpha)^{\mathrm{tr}}$ for some algebraic element $\alpha$. This elementary fact reduces some of the tasks in the later section.
%\end{rem}

\section{Linear equations on tensor powers of the Carlitz module}
    %The main purpose of this section is to derive some sufficient conditions for Carlitz polylogarithms at algebraic points being linearly independent over $k$.
    
\subsection{The main results}
    In this section, we follow \cite{Che20a} closely to study the $\mathbb{F}_q[t]$-linear relations among algebraic points of tensor powers of the Carlitz module. Throughout this section, we fix $L/k$ to be a finite extension. 
    
    Let $P_1,\dots,P_\ell\in \mathbf{C}^{\otimes n}(L)$. Then we may express 
    \begin{equation}\label{Eq:Points_in_Vector_Form}
        P_i=(p^{[i]}_{n-1},\dots,p^{[i]}_0)^{\rm tr}\in\Mat_{n\times 1}(L).
    \end{equation}
    We set 
    \begin{equation}\label{Eq:Associated_Polynomial_of_Points}
        \mathbf{f}_i:=p^{[i]}_0+p^{[i]}_{1}(t-\theta)+\cdots+p^{[i]}_{n-1}(t-\theta)^{n-1}\in L[t].
    \end{equation}
    Then for each $a_1,\dots,a_\ell\in\mathbb{F}_q[t]$ we have
    \begin{equation}\label{Eq:Linear_Combinations_On_Carlitz_Tensor_Powers}
        \left(\epsilon_1\circ\Theta^{-1}\right)(a_1\mathbf{f}_1+\cdots+a_\ell\mathbf{f}_{\ell})=[a_1]_n(P_1)+\cdots+[a_\ell]_n(P_\ell).
    \end{equation}
    
    We first give an equivalence between the existence of $\mathbb{F}_q[t]$-linear relations among $P_1,\dots,P_\ell$ and the existence of the solution of a specific Frobenius difference equation.
    \begin{lem}\label{Lem:Reduction_Carlitz_Tensor_POwers}
        Given $P_1,\dots,P_\ell\in\mathbf{C}^{\otimes n}(L)$ with the expression in \eqref{Eq:Points_in_Vector_Form}, let $\mathbf{f}_i$ be defined as in \eqref{Eq:Associated_Polynomial_of_Points} for each $1\leq i\leq\ell$. Then the following statements are equivalent.
        \begin{enumerate}
            \item There exist $a_1,\dots,a_\ell\in\mathbb{F}_q[t]$ such that
            \begin{equation}\label{Eq:Linear_Relations_of_CTP}
                [a_1]_n(P_1)+\cdots+[a_\ell]_n(P_\ell)=0.
            \end{equation}
            \item There exist $a_1,\dots,a_\ell\in\mathbb{F}_q[t]$ and $g\in L[t]$ such that
            \begin{equation}\label{Eq:System_of_Difference_Equations_of_CTP}
                g^{(1)}-(t-\theta)^ng=a_1\mathbf{f}_1+\cdots+a_\ell\mathbf{f}_{\ell}.
        \end{equation}
        \end{enumerate}
    \end{lem}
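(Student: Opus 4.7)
My plan is to transport both conditions to the Anderson dual $t$-motive $C^{\otimes n}$ via the isomorphism $\epsilon_1 \circ \Theta^{-1}$ recalled in the preceding subsection, and then read off the advertised Frobenius difference equation directly from the explicit $\sigma$-action on $C^{\otimes n} = \overline{k}[t]$.

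The starting point is identity \eqref{Eq:Linear_Combinations_On_Carlitz_Tensor_Powers}, which records that $(\epsilon_1 \circ \Theta^{-1})(a_1 \mathbf{f}_1 + \cdots + a_\ell \mathbf{f}_\ell) = [a_1]_n(P_1) + \cdots + [a_\ell]_n(P_\ell)$. Since $\epsilon_1 \circ \Theta^{-1}$ is an $\mathbb{F}_q[t]$-linear isomorphism from $C^{\otimes n}/(\sigma-1)C^{\otimes n}$ onto $\mathbf{C}^{\otimes n}(\overline{k})$, the relation in (1) is equivalent to the membership $a_1 \mathbf{f}_1 + \cdots + a_\ell \mathbf{f}_\ell \in (\sigma - 1) C^{\otimes n}$. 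Unfolding this via the rule $\sigma h = h^{(-1)}(t-\theta)^n$ on $C^{\otimes n} = \overline{k}[t]$, the membership amounts to the existence of $h \in \overline{k}[t]$ with $(t-\theta)^n h^{(-1)} - h = a_1 \mathbf{f}_1 + \cdots + a_\ell \mathbf{f}_\ell$. Substituting $g := h^{(-1)}$, which is the same as $h = g^{(1)}$, transforms this into
\[
    g^{(1)} - (t-\theta)^n g = -(a_1 \mathbf{f}_1 + \cdots + a_\ell \mathbf{f}_\ell),
\]
and this is precisely the equation of (2) once the sign is absorbed through the bijection $(a_1, \dots, a_\ell) \mapsto (-a_1, \dots, -a_\ell)$ of $\mathbb{F}_q[t]^\ell$.

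The only point requiring additional care is the descent of the witness $g$ from $\overline{k}[t]$ to $L[t]$, which is where I expect the (still mild) main work to lie. I would handle this by a Galois-descent argument: for any $\tau \in \Gal(\overline{k}/L)$, the automorphism $\tau$ commutes with Frobenius twisting and fixes all coefficients on the right-hand side, so $\tau(g)$ satisfies the very same equation as $g$. Consequently $u := g - \tau(g) \in \overline{k}[t]$ solves the homogeneous equation $u^{(1)} = (t-\theta)^n u$, and a direct comparison of $t$-degrees (using $n \geq 1$) forces $u = 0$. Hence $g$ is Galois-fixed and lies in $L[t]$, as required. The reverse implication (2) $\Rightarrow$ (1) is then obtained by running the same chain of equivalences backwards, so no further obstacle arises.
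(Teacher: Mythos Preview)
Your reduction of condition~(1) to the Frobenius difference equation over $\overline{k}[t]$ is exactly the paper's argument, up to the harmless sign absorption you note. The divergence is only in the descent step showing $g\in L[t]$.

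The paper does this by H.-J.\ Chen's direct coefficient comparison: writing $g=\sum g_i t^i$ and $\mathfrak{F}=\sum \mathfrak{F}_j t^j$, the top coefficient of \eqref{Eq:System_of_Difference_Equations_of_CTP} forces $g_m=-\mathfrak{F}_{n+m}\in L$, and one then reads off $g_{m-1},g_{m-2},\dots\in L$ inductively from the lower coefficients. Your Galois-descent alternative is appealing, but as written it has a gap: since $L$ is a finite extension of $\mathbb{F}_q(\theta)$ it is \emph{not} perfect, so $\overline{k}/L$ is not a Galois extension. The fixed field of $\Aut(\overline{k}/L)$ inside $\overline{k}$ is the perfect closure $L^{1/p^\infty}$, not $L$ itself; thus the invariance $\tau(g)=g$ for all $\tau$ only yields $g\in L^{1/p^\infty}[t]$.

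The gap is not fatal. One clean patch, staying within your framework: having shown $g\in L^{1/p^e}[t]$ for some minimal $e\ge 0$, note that $g^{(1)}$ then has coefficients in $L^{1/p^{\max(e-s,0)}}$ (where $q=p^s$), so $(t-\theta)^n g=g^{(1)}-\mathfrak{F}$ lies in $L^{1/p^{\max(e-s,0)}}[t]$; dividing by the monic $(t-\theta)^n$ forces $g$ into that same ring, contradicting minimality of $e$ unless $e=0$. Alternatively, you can simply observe that your uniqueness argument for the homogeneous equation already shows the solution $g$ is determined by $\mathfrak{F}$, and then invoke the paper's recursive construction to see it actually lands in $L[t]$.
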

    
    \begin{proof}
        By \eqref{Eq:Linear_Combinations_On_Carlitz_Tensor_Powers} we know that the existence of $a_1,\dots,a_\ell\in\mathbb{F}_q[t]$ such that \eqref{Eq:Linear_Relations_of_CTP} holds is equivalent to $a_1\mathbf{f}_1+\cdots+a_\ell\mathbf{f}_\ell\in (\sigma-1)C^{\otimes n}$.
        The latter statement is equivalent to the existence of $\delta\in\overline{k}[t]$ such that
        \[
            a_1\mathbf{f}_1+\cdots+a_\ell\mathbf{f}_{\ell}=(\sigma-1)\delta=(t-\theta)^n\delta^{(-1)}-\delta.
        \]
        After defining $g:=\delta^{(-1)}$, we see that the existence of 
        $a_1,\dots,a_\ell\in\mathbb{F}_q[t]$ such that \eqref{Eq:Linear_Relations_of_CTP} holds is equivalent to the existence of $a_1,\dots,a_\ell\in\mathbb{F}_q[t]$ and $g\in\overline{k}[t]$ such that $\eqref{Eq:System_of_Difference_Equations_of_CTP}$ holds.
        
        Now it remains to check that if there exist $a_1,\dots,a_\ell\in\mathbb{F}_q[t]$ and $g\in\overline{k}[t]$ such that \eqref{Eq:System_of_Difference_Equations_of_CTP} holds, then $g$ is essentially in $L[t]$. To see this, we apply H.-J. Chen's approach which is also adopted in the proof of \cite[Thm.~2]{KL16}, \cite[Thm.~6.1.1]{Cha16}, and \cite[Lem.~3.1.1]{Che20a}. Let $\mathfrak{F}:=a_1\mathbf{f}_1+\cdots+a_\ell \mathbf{f}_\ell$. By comparing $\deg_t(\cdot)$ on both sides of \eqref{Eq:System_of_Difference_Equations_of_CTP}, if $\deg_t(g)=m$, then $\deg_t(\mathfrak{F})=n+m$. Now we express $g=g_0+g_1t+\cdots+g_mt^m$ and $\mathfrak{F}=\mathfrak{F}_0+\mathfrak{F}_1t+\cdots+\mathfrak{F}_{n+m}t^{n+m}$ where $g_0,\dots,g_m\in\overline{k}$ and $\mathfrak{F}_0,\dots,\mathfrak{F}_{n+m}\in L$. Then \eqref{Eq:System_of_Difference_Equations_of_CTP} implies that
        \begin{align*}
            g_0^q+g_1^qt+\cdots +g_m^qt^m&=\left(g_0+g_1t+\cdots+g_mt^m\right)\sum_{j=0}^n\binom{n}{j}(-\theta)^{n-j}t^j\\
            &+\left(\mathfrak{F}_0+\mathfrak{F}_1t+\cdots+\mathfrak{F}_{n+m}t^{n+m}\right).
        \end{align*}
        By comparing the coefficients of $t^{n+m}$ of both sides, we obtain $0=g_m+\mathfrak{F}_{n+m}$ and thus $g_m\in L$. Then by comparing the coefficients of $t^{n+m-1},t^{n+m-2},\dots,t^n$ inductively, we get the desired result.
    \end{proof}
    
    Based on the previous lemma, we are able to see the connection between the $\mathbb{F}_q[t]$-linear relations among $P_1,\dots,P_\ell$ and the solution space of an explicit $\mathbb{F}_q[t]$-linear system. Recall that for a place $w\in M_L$ and $\mathbf{f}=f_0+f_1t+\cdots+f_mt^m$, we have defined $\ord_w(\mathbf{f})=\min_{i=0}^m\{\ord_w(f_i)\}$.

    \begin{thm}\label{Thm:Linearization_Carlitz}
        Given $P_1,\dots,P_\ell\in\mathbf{C}^{\otimes n}(L)$ with the expression in \eqref{Eq:Points_in_Vector_Form}, let $\mathbf{f}_i$ be defined as in \eqref{Eq:Associated_Polynomial_of_Points} for each $1\leq i\leq\ell$ and
        $$D:=\sum_{w\in M_L}(-C_w)\cdot w\in\mathrm{Div}(L)$$
        where
        $$C_w:=\min_{1\leq i\leq\ell}\{\mathrm{ord}_w(P_i)+(n-1)\mathrm{ord}_w(t-\theta),\lfloor\frac{n}{q-1}\rfloor\mathrm{ord}_w(t-\theta)\}.$$
        Let $d:=\dim_{\mathbb{F}_q}\mathcal{L}(D)$ and $\{\mathfrak{b}_1,\dots,\mathfrak{b}_d\}$ be an $\mathbb{F}_q$-basis of $\mathcal{L}(D)$. Then for $1\leq i\leq \ell$ we express $\mathbf{f}_i=\mathfrak{f}_1^{[i]}\mathfrak{b}_1+\cdots+\mathfrak{f}_d^{[i]}\mathfrak{b}_d$ for some $\mathfrak{f}_1^{[i]},\dots,\mathfrak{f}_d^{[i]}\in\mathbb{F}_q[t]$.
        Moreover, the following statements are equivalent.
        \begin{enumerate}
            \item There exist $a_1,\dots,a_\ell\in\mathbb{F}_q[t]$ such that
            \begin{equation}\label{Eq:Linear_Relations_of_CTP_2}
                [a_1]_n(P_1)+\cdots+[a_\ell]_n(P_\ell)=0.
            \end{equation}
            \item There exist $a_1,\dots,a_\ell\in\mathbb{F}_q[t]$ and $g_1,\dots,g_d\in \mathbb{F}_q[t]$ such that
            \begin{equation}\label{Eq:Linear_System_of_CTP}
                g_1\mathfrak{b}_1^q+\cdots+g_d\mathfrak{b}_d^q-(t-\theta)^n(g_1\mathfrak{b}_1+\cdots +g_d\mathfrak{b}_d)=\left(\sum_{i=1}^\ell a_i\mathfrak{f}_1^{[i]}\right)\mathfrak{b}_1+\cdots+\left(\sum_{i=1}^\ell a_i\mathfrak{f}_d^{[i]}\right)\mathfrak{b}_d.
            \end{equation}
        \end{enumerate}
    \end{thm}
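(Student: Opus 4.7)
The plan is to exploit the reformulation of $\mathbb{F}_q[t]$-linear relations as the solvability of a Frobenius difference equation, as provided by Lemma~\ref{Lem:Reduction_Carlitz_Tensor_POwers}, and then to perform a place-by-place pole analysis of the resulting auxiliary function $g \in L[t]$.

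The direction $(2)\Rightarrow(1)$ is essentially a substitution. Setting $g := g_1\mathfrak{b}_1+\cdots+g_d\mathfrak{b}_d \in L[t]$, the hypothesis $g_j \in \mathbb{F}_q[t]$ forces $g_j^{(1)}=g_j$, so $g^{(1)} = g_1\mathfrak{b}_1^q+\cdots+g_d\mathfrak{b}_d^q$. Combining this with the expansion $\mathbf{f}_i = \sum_j \mathfrak{f}_j^{[i]}\mathfrak{b}_j$ converts \eqref{Eq:Linear_System_of_CTP} into the Frobenius difference equation $g^{(1)} - (t-\theta)^n g = \sum_i a_i\mathbf{f}_i$, and then Lemma~\ref{Lem:Reduction_Carlitz_Tensor_POwers} yields (1).

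For $(1)\Rightarrow(2)$, Lemma~\ref{Lem:Reduction_Carlitz_Tensor_POwers} supplies $a_1,\dots,a_\ell\in\mathbb{F}_q[t]$ and some $g \in L[t]$ solving $g^{(1)} - (t-\theta)^n g = \mathfrak{F}$, where $\mathfrak{F}:=\sum_i a_i\mathbf{f}_i$. The key step is to show $\ord_w(g) \geq C_w$ for every place $w\in M_L$; this is equivalent to saying that every $t$-coefficient of $g$ lies in $\mathcal{L}(D)$. Granted this, I would expand each such coefficient in the $\mathbb{F}_q$-basis $\{\mathfrak{b}_1,\dots,\mathfrak{b}_d\}$ of $\mathcal{L}(D)$ and regroup by $\mathfrak{b}_j$, producing $g = \sum_j g_j\mathfrak{b}_j$ with $g_j \in \mathbb{F}_q[t]$. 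Substituting back and comparing the $\mathfrak{b}_j$-components of both sides of the Frobenius equation---which is legitimate since $\mathfrak{b}_1,\dots,\mathfrak{b}_d$ remain $\mathbb{F}_q[t]$-linearly independent in $L[t]$---recovers the system \eqref{Eq:Linear_System_of_CTP}.

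The main obstacle is thus the pole analysis giving $\ord_w(g) \geq C_w$. Set $N_w := \ord_w(g)$, $\mu_w := \ord_w(t-\theta)$, $\phi_w := \ord_w(\mathfrak{F})$. Since $\ord_w$ extends to $L[t]$ as a valuation satisfying the ultrametric inequality and $\ord_w(g^{(1)}) = qN_w$, the equation yields $qN_w \geq \min(n\mu_w + N_w,\,\phi_w)$. A direct expansion of $\mathbf{f}_i = \sum_{j=0}^{n-1} p^{[i]}_j(t-\theta)^j$, combined with $\ord_w(a_i)=0$ for nonzero $a_i \in \mathbb{F}_q[t]$ and the sign inequality $\mu_w \leq 0$, gives $\phi_w \geq \min_i\{\ord_w(P_i)+(n-1)\mu_w\}$. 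The two alternatives in the minimum correspond to the two terms in the definition of $C_w$: when the $(t-\theta)^n g$ contribution dominates one extracts $(q-1)N_w \geq n\mu_w$, leading (via integrality of $N_w$ and the sign of $\mu_w$) to $N_w \geq \lfloor n/(q-1)\rfloor\mu_w$; when $\mathfrak{F}$ dominates one obtains $N_w \geq \ord_w(P_i)+(n-1)\mu_w$ for some $i$. Handling the boundary case where both contributions share the same $\ord_w$ (so that cancellation in the Frobenius equation may occur) and ensuring uniform treatment across finite places ($\mu_w = 0$) and infinite places ($\mu_w < 0$) will be the most delicate part of the argument.
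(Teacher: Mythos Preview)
Your proposal matches the paper's approach very closely. Both reduce via Lemma~\ref{Lem:Reduction_Carlitz_Tensor_POwers} to the Frobenius difference equation, both establish that the $t$-coefficients of $g$ (and of each $\mathbf{f}_i$) lie in $\mathcal{L}(D)$ by bounding $\ord_w$ place by place, and both then expand in the chosen $\mathbb{F}_q$-basis $\{\mathfrak{b}_j\}$ of $\mathcal{L}(D)$. The only stylistic difference is that the paper runs the valuation step by contradiction rather than by your direct case split: assuming $\ord_w(g)<C_w$ and using $C_w\le\ord_w(\mathfrak{F})$ together with $\mu_w\le 0$ forces $\ord_w((t-\theta)^n g)<\ord_w(\mathfrak{F})$ \emph{strictly}, so the ultrametric gives the equality $q\,\ord_w(g)=n\mu_w+\ord_w(g)$, and then the bound $C_w\le\lfloor n/(q-1)\rfloor\,\mu_w$ is played off against $(q-1)\ord_w(g)=n\mu_w$. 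This buys exactly the sharpening you were worried about: it replaces your inequality $(q-1)N_w\ge n\mu_w$ by an equality and thereby dissolves the ``boundary case where both contributions share the same $\ord_w$'' that you flagged as delicate. You may find it simplest to recast your argument in that contradiction form.
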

    
    \begin{proof}
        By Lemma~\ref{Lem:Reduction_Carlitz_Tensor_POwers}, it suffices to show that \eqref{Eq:System_of_Difference_Equations_of_CTP} is equivalent to \eqref{Eq:Linear_System_of_CTP}. Let $g\in L[t]$ such that
        \begin{equation}\label{Eq:Difference_Equation_Of_CTP}
            g^{(1)}-(t-\theta)g=a_1\mathbf{f}_1+\cdots+a_\ell\mathbf{f}_\ell.
        \end{equation}
        Let $\mathfrak{F}:=a_1\mathbf{f}_1+\cdots+a_\ell\mathbf{f}_\ell$. We claim that $\mathrm{ord}_w(g)\geq C_w$ and $\mathrm{ord}_w(\mathfrak{F})\geq C_w$ for each $w\in M_L$. We first note that by \eqref{Eq:Points_in_Vector_Form} and \eqref{Eq:Associated_Polynomial_of_Points}, we have
        \begin{align*}
        \mathrm{ord}_w(\mathfrak{F})=\mathrm{ord}_w(a_1\mathbf{f}_1+\cdots+a_\ell\mathbf{f}_\ell)
            &\geq\min_{1\leq i\leq \ell}\{\mathrm{ord}_w(p^{[i]}_0+\cdots+p^{[i]}_{n-1}(t-\theta)^{n-1})\}\\
            &\geq\min_{1\leq i\leq \ell}\{\mathrm{ord}_w(P_i)\}+(n-1)\mathrm{ord}_w(t-\theta)
            \geq C_w.
        \end{align*}
        To see $\mathrm{ord}_w(g)\geq C_w$, we suppose to the contrary that $\mathrm{ord}_w(g)<C_w$. Since $\mathrm{ord}_w(t-\theta)=\min\{\mathrm{ord}_w(1),\mathrm{ord}_w(\theta)\}\leq 0$, we have
        \[
            \mathrm{ord}_w(g)<C_w\leq\mathrm{ord}_w(\mathfrak{F})-n\cdot\mathrm{ord}_w(t-\theta).
        \]
        In particular, we have
        \[
            \mathrm{ord}_w((t-\theta)^ng)=n\cdot\mathrm{ord}_w(t-\theta)+\mathrm{ord}_w(g)<\mathrm{ord}_w(\mathfrak{F}).
        \]
        Then by comparing both sides of \eqref{Eq:Difference_Equation_Of_CTP}, we must have
        \[
            q\cdot\mathrm{ord}_w(g)=n\cdot\mathrm{ord}_w(t-\theta)+\mathrm{ord}_w(g).
        \]
        This implies that
        \[
            n\cdot\mathrm{ord}_w(t-\theta)=(q-1)\cdot\mathrm{ord}_w(g)<(q-1)\cdot C_w\leq (q-1)\cdot\lfloor\frac{n}{q-1}\rfloor\mathrm{ord}_w(t-\theta)
        \]
        which leads to a contradiction. Hence we have $\mathrm{ord}_w(g)\geq C_w$.
        
        Note that we just explained that $g,\mathfrak{F}\in\mathcal{L}(D)\otimes_{\mathbb{F}_q}\mathbb{F}_q[t]$ in the sense that if we regard them as polynomials in variable $t$, then their coefficients are actually in $\mathcal{L}(D)$. Thus, we can express
        \[
            g=g_1\mathfrak{b}_1+\cdots+g_d\mathfrak{b}_d
        \]
        for some $g_1,\dots,g_d\in\mathbb{F}_q[t]$. In particular, \eqref{Eq:Difference_Equation_Of_CTP} becomes
        \[
            g_1\mathfrak{b}_1^q+\cdots+g_d\mathfrak{b}_d^q-(t-\theta)^n(g_1\mathfrak{b}_1+\cdots +g_d\mathfrak{b}_d)=\left(\sum_{i=1}^\ell a_i\mathfrak{f}_1^{[i]}\right)\mathfrak{b}_1+\cdots+\left(\sum_{i=1}^\ell a_i\mathfrak{f}_d^{[i]}\right)\mathfrak{b}_d
        \]
        as desired.
    \end{proof}
    
    To demonstrate Theorem \ref{Thm:Linearization_Carlitz}, we provide the following explicit example.    
    \begin{eg}
We set $n=2$, $q=3$ and $L=k$. Consider the following two points 
\[
P_1=
\left(
    \begin{array}{c}
      0 \\
      \theta^2+1
    \end{array}
\right),
P_2=
\left(
    \begin{array}{c}
      0 \\
      \theta^{-1}
    \end{array}
\right)\in{\bf C}^{\otimes 2}(k).
\]
We claim that $P_1$ and $P_2$ are linearly independent over $\mathbb{F}_3[t]$. To begin with, we choose places $w=\infty, \theta, \theta^2+1$. By the definition of $C_w$, we obtain that 
\[
    C_w=\begin{cases}
        -3& \quad (w=\infty),\\
         -1& \quad (w=\theta),\\
         0& \quad (w=\theta^2+1)
    \end{cases}.
\]
For the divisor $D:=3\cdot(\infty)+1\cdot(\theta)$ of $k$, we set
\[
    \mathcal{L}(D):=\mathcal{L}\left(3\cdot(\infty)+1\cdot(\theta)\right)=\{f\in k^{\times}\mid\ord_\infty(f)\geq -3,~\ord_\theta(f)\geq -1 \}\cup\{0\}.
\]
Then we have
\[
    \mathcal{L}(D)=\mathbb{F}_3\theta^{-1}+\mathbb{F}_3+\mathbb{F}_3\theta+\cdots+\mathbb{F}_3\theta^3.
\]
%the Riemann-Roch space $\mathcal{L}(D)$ with $\mathbb{F}_q$-basis $\mathfrak{b}_1, \mathfrak{b}_2, \ldots, \mathfrak{b}_5$ where $\mathfrak{b}_i=\theta^{i-2}$. 
%Indeed by the definition, each $f\in\mathcal{L}(D)$ satisfies $\dv(f)\geq - D$ which is equivalent to $\ord_w(\dv(f))\geq\ord_w(-D)$. Thus $-1\leq \deg_{\theta}f \leq 3$ and 
%\[
%    \mathfrak{L}(D)=\Span_{\mathbb{F}_q}(\theta^{-1}, 1, \ldots, \theta^3).
%\]
%There exists the divisor $\overline{D}$ of $L$ such that $\mathfrak{b}_i^3, \theta^j\mathfrak{b}_i\in\mathcal{L}(\overline{D})$ for all $1\leq i\leq 5$ and $0\leq j\leq 2$.
%give a little more details
%Let $\mathfrak{b}_i=\theta^{i-2}$ for $1\leq i\leq 5$. 
If there are $a_1, a_2\in\mathbb{F}_3[t]$ such that $[a_1]_2P_1+[a_2]_2P_2=0$, by Theorem \ref{Thm:Linearization_Carlitz}, there exist $g_1,\dots,g_5\in\mathbb{F}_3[t]$ so that the following equation holds:
\[
    g_1(\theta^{-1})^3+g_2+g_3(\theta)^3+\cdots+g_5(\theta^3)^3=(t-\theta)^2(g_1\theta^{-1}+g_2+g_3\theta+\cdots+g_5\theta^3)+a_2\theta^{-1}+a_1+a_1\theta.
\]
By comparing the coefficients of $(\theta^{i})^3$ and $\theta^i$ for $-1\leq i\leq 3$, we obtain 
\[
  \left(
    \begin{array}{ccccccc}
        1 & 0 & 0  & 0 & 0 & 0 & 0 \\
        -t^2 & 0 & 0 & 0 & 0 & 0 & -1  \\
        2t & 1-t^2 & 0 & 0 & 0 & -1 & 0 \\
        -1 & 2t & -t^2 & 0 & 0 & -1 & 0 \\
        0 & -1 & 2t & -t^2 & 0 & 0 & 0 \\
        0 & 0 & 0 & 2t & -t^2 & 0 & 0 \\
        0 & 0 & 0 & -1 & 2t & 0 & 0 \\
        0 & 0 & 0 & 0 & -1 & 0 & 0 \\
        0 & 0 & 0 & 1 & 0 & 0 & 0 \\
        0 & 0 & 0 & 0 & 1 & 0 & 0 
    \end{array}
  \right)
  \left(
  \begin{array}{c} 
  g_1 \\ 
  g_2 \\
  g_3 \\
  g_4 \\
  g_5 \\
  a_1 \\
  a_2
\end{array}
  \right)
  =
   \left(
  \begin{array}{c} 
  0 \\ 
  0 \\
  0 \\
  0 \\
  0 \\
  0 \\
  0
\end{array}
\right).
\]
%resume from here 
%One can check that determinant of the above matrix is non zero, thus all $g_i$ and $a_1, a_2$ must be zero. Therefore $P_1, P_2$ are linearly independent over $\mathbb{F}_q[t]$.   
One can perform the Gauss-Jordan elimination to the above matrix equation. Then we can conclude that $g_1,\dots,g_5$ and $a_1, a_2$ must be zero. Therefore $P_1, P_2$ are linearly independent over $\mathbb{F}_3[t]$.
\end{eg}
    
    Now we are ready to prove Theorem~\ref{Thm:Estimate_of_Relations}.
    %which can be viewed as an analogue of Masser's theorem \cite{Mas88} for tensor powers of the Carlitz module.
    
    \begin{proof}[Proof of Theorem~\ref{Thm:Estimate_of_Relations}]
        Let $D\in\mathrm{div}(L)$ be the divisor of $L$ constructed in Theorem~\ref{Thm:Linearization_Carlitz}. Let $d:=\dim_{\mathbb{F}_q}\mathcal{L}(D)$ and $\{\mathfrak{b}_1,\dots,\mathfrak{b}_d\}$ be an $\mathbb{F}_q$-basis of $\mathcal{L}(D)$. Now we set
        \[
            W:=\mathrm{Span}_{\mathbb{F}_q}\{\mathfrak{b}_i,~\mathfrak{b}_i^q,~\theta^j\mathfrak{b}_i\}_{1\leq i\leq d,~1\leq j\leq n}.
        \]
        Let $\{\lambda_1,\dots,\lambda_{\Tilde{d}}\}\subset W$ be an $\mathbb{F}_q$-basis of $W$, where $\Tilde{d}:=\dim_{\mathbb{F}_q}W$.
        Then we express (\ref{Eq:Linear_System_of_CTP}) as
        \begin{equation}\label{Eq:Linear_System_of_CTP_2}
            Q_1(g_1,\dots,g_d,a_1,\dots,a_\ell)\cdot\lambda_1+\cdots+Q_{\Tilde{d}}(g_1,\dots,g_d,a_1,\dots,a_\ell)\cdot\lambda_{\Tilde{d}}=0
        \end{equation}
        where $Q_i(X_1,\dots,X_{d+\ell})\in\mathbb{F}_q[t][X_1,\dots,X_{d+\ell}]$ is a homogeneous polynomial of degree one in variables $X_1,\dots,X_{d+\ell}$ and $\deg_t(Q_i)\leq 1$ for each $1\leq i\leq \Tilde{d}$. Since $\lambda_1,\dots,\lambda_{\Tilde{d}}$ are $\mathbb{F}_q$-linearly independent, and $\mathbb{F}_q[t]$ and $L$ are linearly disjoint over $\mathbb{F}_q$, we obtain $\lambda_1,\dots,\lambda_{\Tilde{d}}$ are $\mathbb{F}_q[t]$-linearly independent. Thus, $Q_i(g_1,\dots,g_d,a_1,\dots,a_\ell)=0$ for $1\leq i\leq\Tilde{d}$ give rise to an $\mathbb{F}_q[t]$-linear system 
        \[
            \mathfrak{B}\cdot(g_1,\dots,g_d,a_1,\dots,a_\ell)^\tr=0
        \]
        for some $\mathfrak{B}\in\Mat_{m\times s}(\mathbb{F}_q[t])$ with $\deg_t(\mathfrak{B})\leq n$ and $0<m=\rank(\mathfrak{B})<s=d+\ell$. Note that every solution $\mathbf{x}$ of $\mathfrak{B}\mathbf{x}^\tr=0$ gives a solution of (\ref{Eq:Linear_System_of_CTP}) and vice versa. Thus, we have a well-defined surjective $\mathbb{F}_q[t]$-module homomorphism
        \begin{align*}
             \pi:\Gamma:=\{\mathbf{x}\in\mathbb{F}_q[t]^{(d+\ell)}\mid \mathfrak{B}\mathbf{x}^\tr=0\}&\twoheadrightarrow G=\{(a_1,\dots,a_\ell)\in\mathbb{F}_q[t]^\ell\mid \sum_{i=1}^\ell[a_i]_n(P_i)=0\}\\
             (g_1,\dots,g_d,a_1,\dots,a_\ell)&\mapsto (a_1,\dots,a_\ell).
         \end{align*}
         
        By \cite[Cor.~2.2.3]{Che20a}, there exist $\mathbb{F}_q[t]$-linearly independent vectors $\mathbf{x}_1,\dots,\mathbf{x}_{s-m}$ with entries in $\mathbb{F}_q[t]$ such that $\deg_t(\mathbf{x}_i)\leq\rank(\mathfrak{B})\cdot\deg_t(\mathfrak{B})<n(d+\ell)$ and $\mathfrak{B}\cdot\mathbf{x}_i^\tr=0$ for each $1\leq i\leq s-m$. Let $\nu:=\rank_{\mathbb{F}_q[t]}G$. Since $G$ is a free $\mathbb{F}_q[t]$-module of rank $\nu$ and $\pi$ is surjective, there exists an $\mathbb{F}_q[t]$-linearly independent set 
        $$\{\mathbf{m}_1',\dots,\mathbf{m}_\nu'\}\subset\{\pi(\mathbf{x}_1),\dots,\pi(\mathbf{x}_{s-m})\}$$
        such that $G_0:=\mathrm{Span}_{\mathbb{F}_q[t]}\{\mathbf{m}_1',\dots,\mathbf{m}_\nu'\}\subset G$ is of finite index. In other words, we have $\rank_{\mathbb{F}_q[t]}G_0=\rank_{\mathbb{F}_q[t]}G=\nu$. Now we apply \cite[Lem.~2.2.4]{Che20a} and we get an $\mathbb{F}_q[t]$-basis $\{\mathbf{m}_1,\dots,\mathbf{m}_\nu\}$ of $G$ such that $\deg_t(\mathbf{m}_i)\leq\max_{i=1}^\nu\{\deg_t(\mathbf{m}_i')\}<n(d+\ell)$.
    \end{proof}
    
\subsection{A sufficient condition for linear independence}
    Let $L\subset\overline{k}$ be a finite extension of $k$. In this section, we will present a linear independence criterion for a specific family of algebraic points on $\mathbf{C}^{\otimes n}(L)$. As a consequence, we prove that the dimension of the $\mathbb{F}_q(t)$-vector space $\mathbf{C}^{\otimes n}(L)\otimes_{\mathbb{F}_q[t]}\mathbb{F}_q(t)$ is countably infinite. 
    \begin{thm}\label{Thm:Linearly_Independence}
        Let $n\in\mathbb{Z}_{>0}$ and $P_i=(p^{[i]}_{n-1},\dots,p^{[i]}_0)^{\rm tr}\in\mathbf{C}^{\otimes n}(L)$ for $1\leq i\leq\ell$. We set $\mathbf{f}_i:=p^{[i]}_0+\cdots+p^{[i]}_{n-1}(t-\theta)^{n-1}\in L[t]$. Suppose that
        \begin{enumerate}
            \item $\mathbf{f}_1,\dots,\mathbf{f}_\ell$ are linearly independent over $\mathbb{F}_q[t]$,
            \item $\mathrm{ord}_w(\mathbf{f}_i)>0$ for all $1\leq i\leq\ell$, if $w\in M_L$ and $w\mid\infty$,
            \item $\mathrm{ord}_w(\mathbf{f}_i)\geq 1-q$ for all $1\leq i\leq\ell$, if $w\in M_L$ and $w\nmid\infty$.
        \end{enumerate}
        Then
        \[
            \rank_{\mathbb{F}_q[t]}\mathrm{Span}_{\mathbb{F}_q[t]}\{P_1,\dots,P_\ell\}=\ell.
        \]
        In particular, we have
        \[
            \dim_{\mathbb{F}_q(t)}\mathbf{C}^{\otimes n}(L)\otimes_{\mathbb{F}_q[t]}\mathbb{F}_q(t)=\aleph_0.
        \]
    \end{thm}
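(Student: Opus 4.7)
The plan is to argue by contradiction via Lemma~\ref{Lem:Reduction_Carlitz_Tensor_POwers} and a Gauss-valuation analysis place by place. Suppose there exist $a_1,\ldots,a_\ell\in\mathbb{F}_q[t]$, not all zero, with $\sum_{i=1}^\ell[a_i]_nP_i=0$. Then Lemma~\ref{Lem:Reduction_Carlitz_Tensor_POwers} yields $g\in L[t]$ with
\[
g^{(1)}-(t-\theta)^n g \,=\, \mathfrak{F}\,:=\,\sum_{i=1}^\ell a_i\mathbf{f}_i,
\]
and condition~(1) together with the linear disjointness of $L$ and $\mathbb{F}_q[t]$ over $\mathbb{F}_q$ gives $\mathfrak{F}\neq 0$. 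Since $\ord_w(a_i)=0$ at every $w\in M_L$ for any nonzero $a_i\in\mathbb{F}_q[t]$, conditions~(2) and~(3) propagate to $\mathfrak{F}$: one has $\ord_w(\mathfrak{F})\geq 1$ at every $w\mid\infty$ and $\ord_w(\mathfrak{F})\geq 1-q$ at every finite $w$. The goal is to use these valuation bounds to force $g=0$, contradicting $\mathfrak{F}\neq 0$.

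First I would handle the finite places. For $w\nmid\infty$ we have $\ord_w(t-\theta)=0$; if $\ord_w(g)<0$, then $\ord_w(g^{(1)})=q\ord_w(g)<\ord_w(g)=\ord_w((t-\theta)^n g)$, so the equation forces $\ord_w(\mathfrak{F})=q\ord_w(g)\leq -q$, violating $\ord_w(\mathfrak{F})\geq 1-q$. Hence $\ord_w(g)\geq 0$ at every finite $w$. For the infinite places, $\ord_w(t-\theta)=\ord_w(\theta)<0$; setting $E:=\ord_w(g)$, a case analysis on $E$ compared to the threshold $n\ord_w(\theta)/(q-1)$ shows that $E<n\ord_w(\theta)/(q-1)$ yields $\ord_w(\mathfrak{F})=qE<0$, contradicting $\ord_w(\mathfrak{F})\geq 1$; and $E>n\ord_w(\theta)/(q-1)$ yields $\ord_w(\mathfrak{F})=n\ord_w(\theta)+E\geq 1$, whence $E\geq 1-n\ord_w(\theta)>0$. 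The borderline case $E=n\ord_w(\theta)/(q-1)$ requires $(q-1)\mid n\ord_w(\theta)$ and demands cancellation of leading Gauss terms; I would rule it out by rescaling $h:=g\pi_w^{-E}$ to Gauss norm one in $L_w[t]$, reducing the equation modulo a uniformizer $\pi_w$ to obtain $\bar h^{(1)}=\bar U\bar h$ in $\kappa_w[t]$ with $\bar U=\overline{\pi_w^{ne_w}(t-\theta)^n}\in\kappa_w^\times$, and then using $\ord_w(\mathfrak{F})\geq 1$ together with the explicit form $\mathfrak{F}=\sum_i a_i\mathbf{f}_i$ to exclude a nonzero $\bar h$.

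Combining these steps gives $\ord_w(g)\geq 0$ at every finite $w$ and $\ord_w(g)>0$ at every $w\mid\infty$. Each coefficient $c\in L$ of $g$ then satisfies $\ord_w(c)\geq 0$ everywhere and $\ord_w(c)>0$ at each infinite place; for $c\neq 0$ this violates the product formula on $L^\times$, so $c=0$, whence $g=0$ and $\mathfrak{F}=0$, contradicting~(1). For the $\aleph_0$ statement, I would take an infinite family of distinct monic irreducibles $p_1,p_2,\ldots\in A$, set $\alpha_i:=1/p_i(\theta)\in L$ and $P_i:=(0,\ldots,0,\alpha_i)^\tr\in\mathbf{C}^{\otimes n}(L)$ so that $\mathbf{f}_i=\alpha_i$. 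A partial-fraction argument gives $\mathbb{F}_q$-linear independence of the $\alpha_i$'s (and thus $\mathbb{F}_q[t]$-linear independence by disjointness), while $\ord_w(\alpha_i)\geq -1\geq 1-q$ at each finite $w$ and $\ord_w(\alpha_i)=\deg(p_i)>0$ at infinity; the first part applies for every $\ell$, yielding the claimed $\aleph_0$. The main obstacle is the borderline subcase of the infinite-place analysis, where the $\pi_w$-adic structure of $g$ must be tracked against the strict inequality $\ord_w(\mathfrak{F})\geq 1$ coming from~(2).
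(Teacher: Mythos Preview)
Your finite-place analysis and the $\aleph_0$ argument are both correct and essentially match the paper. The gap is exactly where you flag it: the borderline case $E=\ord_w(g)=n\ord_w(\theta)/(q-1)$ at $w\mid\infty$. Your proposed reduction mod~$\pi_w$ yields $\bar h^{(1)}=\bar U\bar h$ in $\kappa_w[t]$ with $\bar U\in\kappa_w^\times$ a constant; but whenever $\bar U$ is a $(q-1)$-th power in $\kappa_w$ (for instance, $L=k$, $\pi_w=1/\theta$, giving $\bar U=(-1)^n$, so always when $p=2$ or $n$ is even), this equation has plenty of nonzero solutions $\bar h=c\cdot p(t)$ with $c^{q-1}=\bar U$ and $p\in\mathbb{F}_q[t]$. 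So first-order reduction does not exclude the borderline value of $E$, and your appeal to ``the explicit form $\mathfrak{F}=\sum_i a_i\mathbf{f}_i$'' adds nothing beyond $\ord_w(\mathfrak{F})\ge 1$, which you have already used. The obstacle you name is therefore a real hole, not just a technicality.

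The paper sidesteps the infinite-place analysis of $g$ altogether by applying the product formula to $\mathfrak{F}$ rather than to $g$. Condition~(2) gives $\ord_w(F_j)>0$ for every $t$-coefficient $F_j$ of $\mathfrak{F}$ at every $w\mid\infty$; since $\mathfrak{F}\neq 0$, its leading coefficient $F_m\in L^\times$ then must have $\ord_w(F_m)<0$ at some \emph{finite} $w$, whence $\ord_w(\mathfrak{F})<0$ there. At that finite $w$ your own computation applies verbatim: one is forced into $\ord_w(g)<0$ and hence $\ord_w(\mathfrak{F})=q\,\ord_w(g)\le -q<1-q$, contradicting condition~(3). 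In other words, the key step you are missing is to push the product-formula argument onto (the leading coefficient of) $\mathfrak{F}$, whose behaviour at infinity is controlled directly by hypothesis~(2), rather than onto $g$, whose behaviour at infinity is not.
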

    
    \begin{proof}
        Suppose on the contrary that there exist $a_1,\dots,a_\ell\in\mathbb{F}_q[t]$ not all zero such that
        \[
            [a_1]_n(P_1)+\cdots+[a_\ell]_n(P_\ell)=0.
        \]
        By Lemma~\ref{Lem:Reduction_Carlitz_Tensor_POwers}, there exists $g\in L[t]$ such that
        \[
            g^{(1)}-(t-\theta)^ng=a_1\mathbf{f}_1+\cdots+a_\ell\mathbf{f}_\ell.
        \]
        Let $F:=a_1\mathbf{f}_1+\cdots+a_\ell\mathbf{f}_\ell\in L[t]$. Since $\mathbf{f}_1,\dots,\mathbf{f}_\ell\in L[t]$ are linearly independent over $\mathbb{F}_q[t]$
        %and , $\alpha_1,\dots,\alpha_\ell\in L[t]$ are linearly independent over $\mathbb{F}_q[t]$ by the linear disjointness over $\mathbb{F}_q$ of $\mathbb{F}_q[t]$ and $L$. Then 
        we have $F\neq 0$ in $L[t]$ and hence $g\neq 0$ in $L[t]$.
        
        Let $w\in M_L$ with $w\mid\infty$. Then
        \[
            \mathrm{ord}_w(F)=\mathrm{ord}_w(a_1\mathbf{f}_1+\cdots +a_\ell\mathbf{f}_\ell)\geq\min\{\mathrm{ord}_w(\mathbf{f}_i)\}_{i=1}^\ell>0.
        \]
        Thus, if we express $F=F_0+F_1t+\cdots+F_mt^m$ where $m\in\mathbb{Z}_{\geq 0}$ and $F_i\in L$ for $0\leq i\leq m$, then $\mathrm{ord}_w(F_i)>0$ for each $0\leq i\leq m$ and $w\in M_L$ with $w\mid\infty$. Since $F\neq 0$ in $L[t]$, we may assume that $F_m\neq 0$. 
        Then there exists $w\in M_L$ with $w\nmid\infty$ such that 
        \[
            \mathrm{ord}_w(F)=\min\{\mathrm{ord}_w(F_i)\}_{i=0}^m\leq\mathrm{ord}_w(F_m)<0.
        \]
        Note that we must have $\mathrm{ord}_w(g)<0$, otherwise
        \begin{align*}
            \mathrm{ord}_w(F)&=\mathrm{ord}_w(g^{(1)}-(t-\theta)^ng)\\
            &\geq\min\{q\cdot\mathrm{ord}_w(g),n\cdot\mathrm{ord}_w(t-\theta)+\mathrm{ord}_w(g)\}\\
            &\geq 0
        \end{align*}
        which leads to a contradiction. 
        
        On the other hand, the fact that $\mathrm{ord}_w(g)<0$ implies that
        \[
            \min\{q\cdot\mathrm{ord}_w(g),n\cdot\mathrm{ord}_w(t-\theta)+\mathrm{ord}_w(g)\}=q\cdot\mathrm{ord}_w(g)
        \]
        and hence $\mathrm{ord}_w(F)=q\cdot\mathrm{ord}_w(g)$. Now the inequality
        \[
            0>q\cdot\mathrm{ord}_w(g)=\mathrm{ord}_w(F)=\mathrm{ord}_w(a_1\mathbf{f}_1+\cdots+a_\ell\mathbf{f}_\ell)\geq 1-q
        \]
        leads to a contradiction {because} $\mathrm{ord}_w(g)\in\mathbb{Z}$.
        
        Finally, we are going to prove that 
        \[
            \dim_{\mathbb{F}_q(t)}\mathbf{C}^{\otimes n}(L)\otimes_{\mathbb{F}_q[t]}\mathbb{F}_q(t)=\aleph_0.
        \]
        Since $L$ is a countable set, it is clear that $\dim_{\mathbb{F}_q(t)}\mathbf{C}^{\otimes n}(L)\otimes_{\mathbb{F}_q[t]}\mathbb{F}_q(t)\leq\aleph_0$. On the other hand, since $\mathbf{C}^{\otimes n}(k)\subset\mathbf{C}^{\otimes n}(L)$, it is enough to show that $\dim_{\mathbb{F}_q(t)}\mathbf{C}^{\otimes n}(k)\otimes_{\mathbb{F}_q[t]}\mathbb{F}_q(t)\geq\aleph_0$.
        
        Let $w\in M_k$ be a finite place and let $f_w\in A$ be the monic irreducible polynomial associated to $w$. We claim that any finite non-empty subset $S$ of $\{(0,\dots,0,f_w^{-1})^\tr\}_{w\in M_k\setminus\{\infty\}}\subset\mathbf{C}^{\otimes n}(k)$ is an $\mathbb{F}_q[t]$-linearly independent set. Indeed, it is clear that $\{f_w^{-1}\}_{w\in M_k\setminus\{\infty\}}$ is an $\mathbb{F}_q$-linearly independent set. Also, $\ord_\infty(f_w^{-1})=\deg_\theta(f_w)>0$ and $\ord_w(f_w^{-1})=-1$ imply that $S$ is an $\mathbb{F}_q[t]$-linearly independent set by the first part of the proof. Hence
        \[
            \aleph_0=\rank_{\mathbb{F}_q[t]}\mathrm{Span}_{\mathbb{F}_q[t]}\{(0,\dots,0,f_v^{-1})^\tr\}_{v\in M_k\setminus\{\infty\}}\leq\dim_{\mathbb{F}_q(t)}\mathbf{C}^{\otimes n}(k)\otimes_{\mathbb{F}_q[t]}\mathbb{F}_q(t).
        \]
        Now the desired result follows immediately.
    \end{proof}

\section{Carlitz polylogarithms}
    The main purpose of this section is to derive some sufficient conditions for Carlitz polylogarithms at algebraic points to be linearly independent over $\overline{k}$.
\subsection{Linear relations among CPLs at algebraic points}
    %Set $L_0:=1$ and $L_i:=(\theta-\theta^q)\cdots(\theta-\theta^{q^i})$ for each $i\geq 1$. 
    For $n\in\mathbb{Z}_{>0}$, we recall that the $n$-th Carlitz polylogarithms (CPLs) is given in \eqref{Eq:CPL}.
    %\begin{equation}\label{Eq:CPL}
    %    \Li_n(z):=\underset{i\geq 0}{\sum}\frac{z^{q^i}}{L_{i}^{n}}\in k\llbracket z\rrbracket.
    %\end{equation}    
    %CPLs can be viewed as an analogue of classical polylogarithms 
    %$$\mathscr{L}_n(z):=\sum_{m>0}\frac{z^{m}}{m^{n}}\in\mathbb{Q}\llbracket z\rrbracket$$
    %in the positive characteristic setting. 
    It is due to Anderson-Thakur \cite{AT90} that if $\alpha\in\mathbb{C}_\infty$ with $|\alpha|_\infty<|\theta|_\infty^{nq/(q-1)}$, then
    \begin{equation}\label{Eq:Log_Interpretation_of_CPL}
        \log_{\mathbf{C}^{\otimes n}}
        \begin{pmatrix}
            0 \\
            \vdots \\
            0 \\
            \alpha
        \end{pmatrix}=
        \begin{pmatrix}
            \star \\
            \vdots \\
            \star \\
            \Li_n(\alpha)
        \end{pmatrix}\in\Lie\mathbf{C}^{\otimes n}(\mathbb{C}_\infty).
    \end{equation}

    In what follows, we give a slight generalization of \eqref{Eq:Log_Interpretation_of_CPL} regarding the last entry of $\log_{\mathbf{C}^{\otimes n}}$ at arbitrary algebraic points $P\in\mathbf{C}^{\otimes n}(\overline{k})$. This is simply an application of Lemma~\ref{Lem:Generators_of_Carlitz_Tensor_Powers} and \eqref{Eq:Log_Interpretation_of_CPL}. It also gives an alternate proof of \cite[Thm.~4.1.4]{CCM22a} (see also \cite[Thm.~3.2.10]{Che20b}) in the case of tensor powers of the Carlitz module.
    \begin{prop}\label{Prop:generalization_of_AT90}
        Let $n\in\mathbb{Z}_{>0}$ and $P=(p_{n-1},\dots,p_0)^\tr\in\mathbf{C}^{\otimes n}(\overline{k})$ with $|p_j|_\infty<q^{-j+\frac{nq}{q-1}}$ for each $0\leq j\leq n-1$. If we set $\mathbf{f}_P:=p_0+p_1(t-\theta)+\cdots+p_{n-1}(t-\theta)^{n-1}\in\overline{k}[t]$, then we have
        \[
            \log_{\mathbf{C}^{\otimes n}}(P)=\begin{pmatrix}
                \star\\
                \vdots\\
                \star\\
                \sum_{j=1}^n\sum_{m=0}^{n-j}(-1)^m\binom{n-j}{m}\theta^{n-m-j}\Li_n(\theta^m p_{n-j})
            \end{pmatrix}=\begin{pmatrix}
                \star\\
                \vdots\\
                \star\\
                \mathcal{L}_{\mathbf{f}_P,n}(\theta)
            \end{pmatrix},
        \]
        where $\mathcal{L}_{\mathbf{f}_P,n}(t)$ is the deformation series defined in \eqref{Eq:t_motivic_CPL} and $\mathcal{L}_{\mathbf{f}_P,n}(\theta):=\mathcal{L}_{\mathbf{f}_P,n}(t)\mid_{t=\theta}$.
    \end{prop}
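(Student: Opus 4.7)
The plan is to establish the two equalities in the proposition separately. For the first equality (identifying the last entry of $\log_{\mathbf{C}^{\otimes n}}(P)$ with the displayed double sum of polylogarithms), I would begin by invoking Lemma~\ref{Lem:Generators_of_Carlitz_Tensor_Powers} to decompose $P = \sum_{i=0}^{n-1}[t^i]_n(0,\dots,0,f_i)^{\tr}$ with $f_i = \sum_{j=i}^{n-1}p_j\binom{j}{i}(-\theta)^{j-i}$. A short verification using the ultrametric inequality and the hypothesis $|p_j|_\infty < q^{-j+\frac{nq}{q-1}}$ will show that $|f_i|_\infty < |\theta|_\infty^{\frac{nq}{q-1}}$, which places each auxiliary point $(0,\dots,0,f_i)^{\tr}$ inside the domain of convergence of \eqref{Eq:Log_Interpretation_of_CPL}.

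Next I would apply the functional equation $\log_{\mathbf{C}^{\otimes n}}\circ[t^i]_n = \partial[t^i]_n\circ\log_{\mathbf{C}^{\otimes n}}$ to split $\log_{\mathbf{C}^{\otimes n}}(P)$ into a finite sum of terms of the form $\partial[t^i]_n\log_{\mathbf{C}^{\otimes n}}\bigl((0,\dots,0,f_i)^{\tr}\bigr)$. Since $\partial[t]_n = \theta I_n + N$ with $N$ the standard super-diagonal nilpotent matrix, the last coordinate of $\partial[t^i]_n v$ is simply $\theta^i v_n$ for every $v\in\overline{k}^n$; combining this with \eqref{Eq:Log_Interpretation_of_CPL}, the last coordinate of $\log_{\mathbf{C}^{\otimes n}}(P)$ collapses to $\sum_{i=0}^{n-1}\theta^i\Li_n(f_i)$. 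I will then expand each $\Li_n(f_i)$ using the $\mathbb{F}_q$-linearity of $\Li_n$ (pulling out the scalars $\binom{j}{i}\in\mathbb{F}_p$ and $(-1)^{j-i}$, while keeping $\theta^{j-i}$ inside the polylogarithm), and a change of variables $(i',m):=(j,\,j-i)$ followed by $j':=n-i'$ will bring the expression into exact agreement with the double sum in the statement.

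For the second equality, I would compute $\mathcal{L}_{\mathbf{f}_P,n}(\theta)$ directly from \eqref{Eq:t_motivic_CPL}. After expanding $\mathbf{f}_P^{(i)}\big|_{t=\theta} = \sum_{k=0}^{n-1} p_k^{q^i}(\theta-\theta^{q^i})^k$ and applying the binomial theorem $(\theta-\theta^{q^i})^k = \sum_{m=0}^{k}\binom{k}{m}(-1)^m\theta^{k-m}(\theta^m)^{q^i}$, interchanging the order of summation assembles each inner sum over $i$ into a copy of $\Li_n(\theta^m p_k)$. The boundary term that arises from isolating the $i=0$ contribution, namely $-p_k\theta^k\sum_{m=0}^k\binom{k}{m}(-1)^m$, vanishes thanks to the identity $(1-1)^k = 0$ for $k\geq 1$, leaving precisely the double sum already obtained in the previous step.

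I expect the main obstacle to be the combinatorial bookkeeping in the two re-indexing steps, together with the careful verification that the boundary corrections from isolating the $i=0$ terms in the direct expansion of $\mathcal{L}_{\mathbf{f}_P,n}(\theta)$ cancel via the binomial identity. No deeper machinery is needed beyond Lemma~\ref{Lem:Generators_of_Carlitz_Tensor_Powers}, the Anderson--Thakur formula \eqref{Eq:Log_Interpretation_of_CPL}, and the $\mathbb{F}_q$-linearity of $\Li_n$ coming from its definition as a sum of iterated Frobenius twists.
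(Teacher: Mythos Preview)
Your argument for the first equality is essentially identical to the paper's: decompose $P$ via Lemma~\ref{Lem:Generators_of_Carlitz_Tensor_Powers}, check convergence, apply the functional equation of $\log_{\mathbf{C}^{\otimes n}}$, read off the last coordinate as $\sum_{i=0}^{n-1}\theta^i\Li_n(f_i)$, and re-index.

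For the second equality the paper takes a slightly shorter route than you do. Having already written $\mathbf{f}_P=f_0+f_1t+\cdots+f_{n-1}t^{n-1}$ with the \emph{same} $f_i$'s, the paper simply observes that
\[
\mathcal{L}_{\mathbf{f}_P,n}=\sum_{i=0}^{n-1}t^{i}\Bigl(\sum_{j\geq 0}\tfrac{f_i^{q^j}}{\mathbb{L}_j^n}\Bigr),
\]
so specializing at $t=\theta$ gives $\sum_{i=0}^{n-1}\theta^i\Li_n(f_i)$ at once, matching the expression already obtained in the first half. Your direct binomial expansion of $\mathbf{f}_P^{(i)}\big|_{t=\theta}$ from the $(t-\theta)$-basis reaches the same double sum after the substitution $k=n-j$, and is perfectly valid; it just duplicates combinatorics that the paper avoids by recycling the $f_i$'s. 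Your remark about a vanishing ``boundary term'' at $i=0$ is unnecessary, since $\Li_n$ as defined in \eqref{Eq:CPL} already starts at $i=0$, so the inner sum over $i$ is $\Li_n(\theta^m p_k)$ on the nose with nothing to correct.
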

    
    \begin{proof}\label{Prop:Linear_Relations_Among_CPLs}
        Note that by Lemma~\ref{Lem:Generators_of_Carlitz_Tensor_Powers}, we have
        \begin{equation}\label{Eq:Decomposition_of_bv}
            P=\sum_{i=0}^{n-1}[t^i]_n(0,\dots,0,f_i)^{\mathrm{tr}}
        \end{equation}
        where $f_i=\sum_{j=i}^{n-1}p_j\binom{j}{i}(-\theta)^{j-i}$. The condition $|p_j|_\infty<q^{-j+\frac{nq}{q-1}}$ for each $0\leq j\leq n-1$ implies that $|\theta^m p_j|_{\infty}<q^{\frac{nq}{q-1}}$ for each $0\leq j\leq n-1$ and $0\leq m \leq j$. It ensures that all the points $P$ and $(0,\dots,0,f_i)^{\mathrm{tr}}$ for each $0\leq i\leq n-1$ are inside the convergence domain of $\log_{\mathbf{C}^{\otimes n}}$. Then we may apply $\log_{\mathbf{C}^{\otimes n}}(\cdot)$ on both sides of \eqref{Eq:Decomposition_of_bv} to get
        \begin{equation}\label{Eq:Log_of_P}
            \log_{\mathbf{C}^{\otimes n}}(P)=\sum_{i=0}^{n-1}\partial[t^i]_n\log_{\mathbf{C}^{\otimes n}}(0,\dots,0,f_i)^{\mathrm{tr}}.
        \end{equation}
        In particular, the $n$-th coordinate of $\log_{\mathbf{C}^{\otimes n}}(P)$ equals to
        \begin{align*}
            \sum_{i=0}^{n-1}\theta^i\Li_n(f_i)&=\sum_{i=0}^{n-1}\theta^i\Li_n(\sum_{j=i}^{n-1}p_j\binom{j}{i}(-\theta)^{j-i})\\
            &=\sum_{i=0}^{n-1}\sum_{j=i}^{n-1}(-1)^{j-i}\binom{j}{i}\theta^i\Li_n(\theta^{j-i}p_j)\\
            &=\sum_{i=1}^n\sum_{j=1}^i(-1)^{i-j}\binom{n-j}{n-i}\theta^{n-i}\Li_n(\theta^{i-j}p_{n-j})\\
            &=\sum_{j=1}^n\sum_{i=j}^n(-1)^{i-j}\binom{n-j}{n-i}\theta^{n-i}\Li_n(\theta^{i-j}p_{n-j})\\
            &=\sum_{j=1}^n\sum_{m=0}^{n-j}(-1)^m\binom{n-j}{m}\theta^{n-m-j}\Li_n(\theta^m p_{n-j}).
        \end{align*}
        Here the second equality comes from the $\mathbb{F}_q$-linearity of $\Li_n(\cdot)$. In the third equality, we replace $i$ and $j$ by $n-i$ and $n-j$ respectively. The fourth equality is just the change of the order of the summation. The last equality follows by the change of variables $m=i-j$.

        To complete the proof of this proposition, it remains to explain the second equality in the statement. Recall from \eqref{Eq:t_motivic_CPL} that
        \[
            \mathcal{L}_{\mathbf{f}_P,n}(\theta)=\left(\sum_{j\geq 0}\frac{\mathbf{f}_P^{(j)}}{\mathbb{L}_j^n}\right)|_{t=\theta}.
        \]
        Note that $\mathbf{f}_P=f_0+f_1t+\cdots+f_{n-1}t^{n-1}$. Thus,
        \begin{align*}
            \sum_{j\geq 0}\frac{\mathbf{f}_P^{(j)}}{\mathbb{L}_j^n}&=\sum_{i\geq 0}\frac{(f_0+f_1t+\cdots+f_{n-1}t^{n-1})^{(j)}}{\mathbb{L}_j^n}\\
            &=\sum_{i=0}^{n-1}t^i\left(\sum_{j\geq 0}\frac{f_i^{q^j}}{\mathbb{L}_j^n}\right)
        \end{align*}
        By specializing at $t=\theta$, we derive that $\mathcal{L}_{\mathbf{f}_P,n}(\theta)=\sum_{i=0}^{n-1}\theta^i\Li_n(f_i)$. The desired result now follows immediately by comparing with the $n$-th coordinate of the right-hand-side of \eqref{Eq:Log_of_P}.
    \end{proof}

    \begin{rem}
        Note that \cite[Thm.~4.1.4]{CCM22a} and \cite[Thm.~3.2.10]{Che20b} use a different ordering for the coordinate. More precisely, if we set $\mathbf{x}=(x_1,\dots,x_n)^\tr\in\mathbf{C}^{\otimes n}(\overline{k})$, then the $n$-th coordinate of $\log_{\mathbf{C}^{\otimes n}}(\mathbf{x})$ is given by $\sum_{j=1}^n\sum_{m=0}^{n-j}(-1)^m\binom{n-j}{m}\theta^{n-m-j}\Li_n(\theta^m x_j)$ which exactly matches with the formula established in \cite[Thm.~4.1.4]{CCM22a} and \cite[Thm.~3.2.10]{Che20b} for the special case of $\mathbf{C}^{\otimes n}$.
    \end{rem}

    \begin{eg}
        We mention that Proposition~\ref{Prop:Linear_Relations_Among_CPLs} can be used to produce linear relations among CPLs at algebraic points. This is in the same spirit of \cite[Ex.~3.2.13]{Che20b}. Let $q=3$, $n=2$, and $\mathbf{v}=(0,1)^\tr\in\mathbf{C}^{\otimes 2}(k)$. Then Proposition~\ref{Prop:Linear_Relations_Among_CPLs} shows that
        \[
            \log_{\mathbf{C}^{\otimes 2}}([t^2-1]_2\mathbf{v})=\log_{\mathbf{C}^{\otimes 2}}\begin{pmatrix}
                2\theta\\
                \theta^2
            \end{pmatrix}=\begin{pmatrix}
                \star\\
                2\theta\Li_2(\theta)-\Li_2(\theta^2)
            \end{pmatrix}.
        \]
        On the other hand, by the functional equation of $\log_{\mathbf{C}^{\otimes 2}}$, we have
        \[
            \log_{\mathbf{C}^{\otimes 2}}([t^2-1]_2\mathbf{v})=(\theta^2-1)\log_{\mathbf{C}^{\otimes 2}}\begin{pmatrix}
                0\\
                1
            \end{pmatrix}=\begin{pmatrix}
                \star\\
                (\theta^2-1)\Li_2(1)
            \end{pmatrix}.
        \]
        Combining the equations above, we conclude that
        \[
            2\theta\Li_2(\theta)-\Li_2(\theta^2)=(\theta^2-1)\Li_2(1).
        \]
    \end{eg}
    
    By \cite[Thm.~2.3]{Yu91}, it is known that for each non-zero vector $Y=(y_1,\dots,y_n)^{\mathrm{tr}}\in\Lie\mathbf{C}^{\otimes n}(\mathbb{C}_\infty)$ satisfying $\exp_{\mathbf{C}^{\otimes n}}(Y)\in\mathbf{C}^{\otimes n}(\overline{k})$, the $n$-th coordinate $y_n$ is transcendental over $k$. Using this transcendence result together with Proposition~\ref{Prop:Linear_Relations_Among_CPLs}, we can deduce the following result immediately.
    
    \begin{lem}\label{Lem:Lower_Bound}
        Let $n\in\mathbb{Z}_{>0}$ and $P_i=(p^{[i]}_{n-1},\dots,p^{[i]}_0)^{\rm tr}\in\mathbf{C}^{\otimes n}(\overline{k})$ for $1\leq i\leq\ell$ with $|p_j|_\infty<q^{-j+\frac{nq}{q-1}}$ for each $0\leq j\leq n-1$. We set $\mathbf{f}_i:=p^{[i]}_0+\cdots+p^{[i]}_{n-1}(t-\theta)^{n-1}\in \overline{k}[t]$. Then
        \[
            \dim_k\Span_k\{\mathcal{L}_{\mathbf{f}_1,n}(\theta),\dots,\mathcal{L}_{\mathbf{f}_\ell,n}(\theta)\}\geq\rank_{\mathbb{F}_q[t]}\Span_{\mathbb{F}_q[t]}\{P_1,\dots,P_\ell\}
        \]
    \end{lem}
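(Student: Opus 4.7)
The plan is to argue by contradiction, reading each $\mathcal{L}_{\mathbf{f}_i,n}(\theta)$ as the last coordinate of $\log_{\mathbf{C}^{\otimes n}}(P_i)$ via Proposition~\ref{Prop:Linear_Relations_Among_CPLs}, and then invoking Yu's transcendence result \cite[Thm.~2.3]{Yu91} to force a collapse.

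Set $r:=\rank_{\mathbb{F}_q[t]}\Span_{\mathbb{F}_q[t]}\{P_1,\dots,P_\ell\}$ and, after relabeling, assume $P_1,\dots,P_r$ are $\mathbb{F}_q[t]$-linearly independent. Suppose for contradiction that $\mathcal{L}_{\mathbf{f}_1,n}(\theta),\dots,\mathcal{L}_{\mathbf{f}_r,n}(\theta)$ are $k$-linearly dependent. Clearing denominators yields $d_1,\dots,d_r\in A$, not all zero, with $\sum_{i=1}^r d_i\mathcal{L}_{\mathbf{f}_i,n}(\theta)=0$. Writing $d_i=\tilde{d}_i|_{t=\theta}$ for $\tilde{d}_i\in\mathbb{F}_q[t]$ (under the tautological isomorphism $\mathbb{F}_q[t]\xrightarrow{\sim}A$ sending $t\mapsto\theta$), we obtain non-trivial $\tilde{d}_1,\dots,\tilde{d}_r\in\mathbb{F}_q[t]$.

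The next key step is to analyze how $\partial[\tilde{d}_i]_n$ acts on the last coordinate. From Definition~\ref{Def:Carlitz_Tensor_Powers} we see $\partial[t]_n=\theta I_n+N$, where $N$ is the nilpotent matrix with $1$'s on the super-diagonal. Since $N$ is strictly upper triangular, an elementary induction shows that the last row of $(\theta I_n+N)^j=\partial[t^j]_n$ equals $(0,\dots,0,\theta^j)$ for every $j\geq 0$. By $\mathbb{F}_q$-linearity, the last row of $\partial[\tilde{d}_i]_n$ is $(0,\dots,0,d_i)$. Setting
\[
Y:=\sum_{i=1}^r\partial[\tilde{d}_i]_n\log_{\mathbf{C}^{\otimes n}}(P_i)\in\Lie\mathbf{C}^{\otimes n}(\mathbb{C}_\infty),
\]
which is well-defined by the convergence hypotheses on the $p_j^{[i]}$ inherited from Proposition~\ref{Prop:Linear_Relations_Among_CPLs}, the last coordinate of $Y$ equals $\sum_{i=1}^r d_i\mathcal{L}_{\mathbf{f}_i,n}(\theta)=0$. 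Moreover, using the functional equation $\exp_{\mathbf{C}^{\otimes n}}\circ\partial[\tilde{d}_i]_n=[\tilde{d}_i]_n\circ\exp_{\mathbf{C}^{\otimes n}}$ and $\exp_{\mathbf{C}^{\otimes n}}\circ\log_{\mathbf{C}^{\otimes n}}(P_i)=P_i$, we get
\[
\exp_{\mathbf{C}^{\otimes n}}(Y)=\sum_{i=1}^r[\tilde{d}_i]_n P_i\in\mathbf{C}^{\otimes n}(\overline{k}).
\]

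Finally, I would apply Yu's transcendence theorem. If $Y\neq 0$, then its last coordinate must be transcendental over $k$; but this last coordinate is $0$, a contradiction. Hence $Y=0$, and then $\sum_{i=1}^r[\tilde{d}_i]_n P_i=\exp_{\mathbf{C}^{\otimes n}}(0)=0$, contradicting the $\mathbb{F}_q[t]$-linear independence of $P_1,\dots,P_r$. The main technical point is the computation that the last row of $\partial[\tilde{d}_i]_n$ has $d_i$ in its last entry and zeros elsewhere; this is precisely the bridge that converts a $k$-linear relation among values of $\mathcal{L}_{\mathbf{f}_i,n}(\theta)$ into an $\mathbb{F}_q[t]$-linear relation among the points $P_i$, which is what allows Proposition~\ref{Prop:Linear_Relations_Among_CPLs} and Yu's theorem to interlock.
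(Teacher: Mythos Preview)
Your proof is correct and follows essentially the same route as the paper: interpret $\mathcal{L}_{\mathbf{f}_i,n}(\theta)$ as the last coordinate of $\log_{\mathbf{C}^{\otimes n}}(P_i)$ via Proposition~\ref{Prop:Linear_Relations_Among_CPLs}, form $Y=\sum_i\partial[\tilde d_i]_n\log_{\mathbf{C}^{\otimes n}}(P_i)$, and use Yu's theorem \cite[Thm.~2.3]{Yu91} to force $Y=0$ and hence a nontrivial $\mathbb{F}_q[t]$-relation among the $P_i$. The only differences are organizational---you first pass to an $\mathbb{F}_q[t]$-independent subfamily, and you spell out explicitly that the last row of $\partial[\tilde d_i]_n$ is $(0,\dots,0,d_i)$, a point the paper leaves implicit.
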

    
    \begin{proof}
        Suppose that there is a non-trivial $k$-linear relations among $\mathcal{L}_{\mathbf{f}_1,n}(\theta),\dots,\mathcal{L}_{\mathbf{f}_\ell,n}(\theta)$, that is, there exist $c_1,\dots,c_\ell\in k$ not all zero such that
        \[
            c_1\mathcal{L}_{\mathbf{f}_1,n}(\theta)+\cdots+c_\ell\mathcal{L}_{\mathbf{f}_\ell,n}(\theta))=0.
        \]
        After multiplying denominators of $c_1,\dots,c_\ell$ if it is necessary, we may assume that all the coefficients $c_1,\dots,c_\ell\in A$. Now by Proposition~\ref{Prop:Linear_Relations_Among_CPLs} we set
        $$Y_i:=\log_{\mathbf{C}^{\otimes n}}(P_i)=
        \begin{pmatrix}
            \star \\
            \vdots \\
            \star \\
            \mathcal{L}_{\mathbf{f}_i,n}(\theta)
        \end{pmatrix}\in \Lie\mathbf{C}^{\otimes n}(\mathbb{C}_\infty)$$
        for each $1\leq i\leq \ell$. We claim that 
        $$X:=[c_1(t)]_nP_1+\cdots+[c_\ell(t)]_nP_n=0\in\mathbf{C}^{\otimes n}(\overline{k}).$$
        Let 
        $$Y:=\partial[c_1(t)]_nY_1+\cdots+\partial[c_\ell(t)]_nY_\ell\in\Lie\mathbf{C}^{\otimes n}(\mathbb{C}_\infty).$$
        Then
        $$\exp_{\mathbf{C}^{\otimes n}}(Y)=[c_1(t)]_n\exp_{\mathbf{C}^{\otimes n}}(Y_1)+\cdots+[c_\ell(t)]_n\exp_{\mathbf{C}^{\otimes n}}(Y_\ell)=X.$$
        If we write $Y=(y_1,\dots,y_n)^{\mathrm{tr}}$, then it is clear from the definition that
        \[
            y_n=c_1\mathcal{L}_{\mathbf{f}_1,n}(\theta)+\cdots+c_\ell\mathcal{L}_{\mathbf{f}_\ell,n}(\theta)=0.
        \]
        Then Yu's theorem \cite[Thm~2.3]{Yu91} implies that $Y$ is a zero vector and hence $X=0$ as desired. As every $k$-linear relation among $\mathcal{L}_{\mathbf{f}_1,n}(\theta),\dots,\mathcal{L}_{\mathbf{f}_\ell,n}(\theta)$ can be lifted to a $\mathbb{F}_q[t]$-linear relation among $P_1,\dots,P_\ell$, the desired inequality now follows immediately.
    \end{proof}

    \begin{rem}
        We mention that the same spirit of Lemma~\ref{Lem:Lower_Bound} was already known in \cite[Thm.~5.1.1]{Cha16}. In fact, we have the following identity
        \begin{equation}\label{Eq:dimension_and_rank}
            \dim_k\Span_k\{\Tilde{\pi}^n,\mathcal{L}_{\mathbf{f}_1,n}(\theta),\dots,\mathcal{L}_{\mathbf{f}_\ell,n}(\theta)\}=1+\rank_{\mathbb{F}_q[t]}\Span_{\mathbb{F}_q[t]}\{P_1,\dots,P_\ell\}.
        \end{equation}
    \end{rem}
    
    By Theorem~\ref{Thm:Linearly_Independence} and Lemma~\ref{Lem:Lower_Bound}, we can deduce Theorem~\ref{Thm:Intro_Application}.
    
    %\begin{thm}\label{Thm:Linearly_Independence_of_CPLs_at_APs}
    %    Let $L\subset\overline{k}$ be a finite extension of $k$. Let $n\in\mathbb{Z}_{>0}$ and $\alpha_i\in L$ with $|\alpha_i|_\infty<|\theta|_\infty^{nq/q-1}$ for $1\leq i\leq \ell$. Suppose that
    %    \begin{enumerate}
    %        \item $\alpha_1,\dots,\alpha_m$ are linearly independent over $\mathbb{F}_q$,
    %        \item $\mathrm{ord}_w(\alpha_i)>0$ for all $1\leq i\leq \ell$, if $w\in M_L$ and $w\mid\infty$,
    %        \item $\mathrm{ord}_w(\alpha_i)\geq 1-q$ for all $1\leq i\leq\ell$, if $w\in M_L$ and $w\not\mid\infty$.
    %    \end{enumerate}
    %    Then
    %    \[
    %        \dim_k\Span_k\{\Li_n(\alpha_1),\dots,\Li_n(\alpha_\ell)\}=\ell.
    %    \]
    %    Moreover, if $|\alpha_i|_\infty<|\theta|_\infty^{q/q-1}$ for each $1\leq i\leq\ell$, then
    %    \[
    %        \dim_k\Span_k\{1,\Li_1(\alpha_1),\dots,\Li_1(\alpha_\ell),\dots,\Li_n(\alpha_1),\dots,\Li_n(\alpha_\ell)\}=n\ell+1.
    %    \]
    %\end{thm}

    \begin{proof}[Proof of Theorem~\ref{Thm:Intro_Application}]
        To prove the first assertion, note that we have
        \begin{align*}
            \ell&\geq \dim_{\overline{k}}\Span_{\overline{k}}\{\mathcal{L}_{\mathbf{f}_1,n}(\theta),\dots,\mathcal{L}_{\mathbf{f}_\ell,n}(\theta)\}\\
            &= \dim_k\Span_k\{\mathcal{L}_{\mathbf{f}_1,n}(\theta),\dots,\mathcal{L}_{\mathbf{f}_\ell,n}(\theta)\}\\
            &\geq \rank_{\mathbb{F}_q[t]}\Span_{\mathbb{F}_q[t]}\{P_1,\dots,P_\ell\}\\
        &=\ell.
        \end{align*}
        Here the first inequality is clear from the counting argument, the second equality follows by \cite[Thm.~5.4.3]{Cha14}, the third inequality comes from Lemma~\ref{Lem:Lower_Bound}, and the last equality follows by Theorem~\ref{Thm:Linearly_Independence}.

        For the second assertion, we first notice that $\Li_n(\mathbf{f}_i)$ is a $k$-linear combination of $\Li_n(\cdot)$ at some explicitly constructed algebraic points by Proposition~\ref{Prop:Linear_Relations_Among_CPLs}. Then \cite[Thm.~5.4.3]{Cha14} implies that
        \begin{align*}
            \dim_{\overline{k}}\Span_{\overline{k}}&\{1,\mathcal{L}_{\mathbf{f}_1,1}(\theta),\dots,\mathcal{L}_{\mathbf{f}_\ell,1}(\theta),\dots,\mathcal{L}_{\mathbf{f}_1,n}(\theta),\dots,\mathcal{L}_{\mathbf{f}_\ell,n}(\theta)\}\\
            &=1+\sum_{i=1}^n\dim_k\Span_k\{\mathcal{L}_{\mathbf{f}_1,i}(\theta),\dots,\mathcal{L}_{\mathbf{f}_\ell,i}(\theta)\}.
        \end{align*}
        The desired result now follows immediately from the first assertion.
    \end{proof}

    \begin{proof}[Proof of Corollary~\ref{Cor:Algebraic_Independence}]
        Recall that $n_1,\dots,n_d$ are $d$ distinct positive integers so that $n_i/n_j$ is not an integral power of $p$ for each $i\neq j$. By \cite[Thm.~4.2]{Mis17} (see also \cite{CY07}), if $\Tilde{\pi}^{n_i},\mathcal{L}_{\mathbf{f}_1,n_i}(\theta),\dots,\mathcal{L}_{\mathbf{f}_\ell,n_i}(\theta)$ are linearly independent over $k$, then the set of $d\ell+1$ elements $\{\Tilde{\pi},\mathcal{L}_{\mathbf{f}_j,n_i}(\theta)\mid 1\leq i\leq d,~1\leq j\leq \ell\}$ are algebraically independent over $\overline{k}$. Since $\mathbf{f}_1,\dots,\mathbf{f}_\ell\in L[t]$ satisfy the sufficient conditions stated in Theorem~\ref{Thm:Intro_Application}, the desired result follows from Theorem~\ref{Thm:Linearly_Independence} together with \eqref{Eq:dimension_and_rank}.
    \end{proof}

\subsection{$v$-adic CPLs at algebraic points}
    %Let $v\in M_k$ be a finite place. We define the normalized $v$-adic absolute value on $k$ by setting $|v|_v=(1/q)^{\deg_{\theta}(v)}$. Consider the $v$-adic completion $k_v$ of $k$ with respect to $|\cdot|_v$. Let $\mathbb{C}_v$ be the $v$-adic completion of the algebraic closure of $k_v$. Throughout this section, we fix an embedding $\iota_v:\overline{k}\hookrightarrow\mathbb{C}_v$. 
    Recall that CPLs converges at $z_0\in\mathbb{C}_v$ with $|z_0|_v<1$ if we regard \eqref{Eq:CPL} as $v$-adic analytic functions on $\mathbb{C}_v$ via the embedding $\iota_v:\overline{k}\to\mathbb{C}_v$ as we mentioned in the introduction. 
    %In this case, we denote by $\Li_n(z_0)_v$ for its value in $\mathbb{C}_v$. Furthermore, 
    Let $\alpha\in\overline{k}$ with $|\alpha|_v<1$. It is known due to Anderson and Thakur \cite{AT90} that \eqref{Eq:Log_Interpretation_of_CPL} still holds in the $v$-adic setting. More precisely, we have
    \begin{equation}\label{Eq:Log_Interpretation_of_vCPL}
        \log_{\mathbf{C}^{\otimes n}}
        \begin{pmatrix}
            0 \\
            \vdots \\
            0 \\
            \alpha
        \end{pmatrix}=
        \begin{pmatrix}
            \ast \\
            \vdots \\
            \ast \\
            \Li_n(\alpha)_v
        \end{pmatrix}\in\Lie\mathbf{C}^{\otimes n}(\mathbb{C}_v).
    \end{equation}
    
    Inspired by the $v$-twist operation proposed by Anderson and Thakur \cite[pg.~187]{AT90}, Chang and Mishiba introduced a way to enlarge the defining domain of CPLs {to $\{\alpha\in\overline{k} \mid |\alpha|_v\leq 1\}$.} To be precise, for $\alpha\in\overline{k}$ with $|\alpha|_v\leq 1$, the $v$-adic CPL at $\alpha$ is defined by
    $$\Li_n(\alpha)_v:=a^{-1}\times n\mbox{-th coordinate of }
    \log_{\mathbf{C}^{\otimes n}}\left([a(t)]_n
        \begin{pmatrix}
            0 \\
            \vdots \\
            0 \\
            \alpha
        \end{pmatrix}\right)\in\Lie\mathbf{C}^{\otimes n}(\mathbb{C}_v)
    $$
    whenever $a\in A$ so that $[a(t)]_n(0,\dots,0,\alpha)^{\mathrm{tr}}\in\mathbf{C}^{\otimes n}(\overline{k})$ lies in the $v$-adic convergence domain of $\log_{\mathbf{C}^{\otimes n}}$. Note that \cite[Prop.~4.1.1]{CM19} guarantees the existence of such element $a\in A$. In addition, Yu's transcendence theorem \cite[Thm.3.7]{Yu91} for the last coordinate of $\log_{\mathbf{C}^{\otimes n}}$ is still valid in the $v$-adic setting. Now we are able to present a $v$-adic analogue of Lemma~ \ref{Lem:Lower_Bound}.
    
    \begin{lem}\label{Lem:Lower_Bound_v}
        Let $\alpha_1,\dots,\alpha_\ell\in\overline{k}$ such that $|\alpha_i|_v\leq 1$ for each $1\leq i\leq \ell$. Then
        $$\dim_k\Span_k\{\Li_n(\alpha_1)_v,\dots,\Li_n(\alpha_\ell)_v\}\geq\rank_{\mathbb{F}_q[t]}\Span_{\mathbb{F}_q[t]}\{
        \begin{pmatrix}
            0 \\
            \vdots \\
            0 \\
            \alpha_1
        \end{pmatrix},\dots,
        \begin{pmatrix}
            0 \\
            \vdots \\
            0 \\
            \alpha_\ell
        \end{pmatrix}
        \}.$$
    \end{lem}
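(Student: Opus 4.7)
The plan is to imitate the proof of Lemma~\ref{Lem:Lower_Bound}, with the main twist being that the points $(0,\dots,0,\alpha_i)^\tr$ may lie on the boundary $|\alpha_i|_v = 1$ and thus outside the $v$-adic convergence disk of $\log_{\mathbf{C}^{\otimes n}}$; we therefore must work through the Chang--Mishiba extension \cite{CM19}. Suppose we are given a nontrivial $k$-linear relation $\sum_{i=1}^\ell c_i \Li_n(\alpha_i)_v = 0$, and after clearing denominators assume $c_i \in A$. For each $i$, by \cite[Prop.~4.1.1]{CM19} pick a nonzero $a_i \in A$ such that the point $Q_i := [a_i]_n(0,\dots,0,\alpha_i)^\tr$ lies inside the $v$-adic convergence disk of $\log_{\mathbf{C}^{\otimes n}}$, and set $Y_i := \log_{\mathbf{C}^{\otimes n}}(Q_i) \in \Lie\mathbf{C}^{\otimes n}(\mathbb{C}_v)$; by the very definition of $\Li_n(\alpha_i)_v$, the $n$-th coordinate of $Y_i$ equals $a_i(\theta) \Li_n(\alpha_i)_v$.

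Next, put $b := \prod_{j=1}^\ell a_j \in A$ and $d_i := b c_i / a_i \in A$, viewed as elements of $\mathbb{F}_q[t]$ via the identification $\theta \leftrightarrow t$. Consider the vector
\[
    Y := \sum_{i=1}^\ell \partial[d_i]_n Y_i \in \Lie\mathbf{C}^{\otimes n}(\mathbb{C}_v).
\]
Since $\partial[t]_n = \theta I_n + N$ with $N$ strictly upper triangular, every positive power of $N$ has vanishing last row, so the $n$-th coordinate of $\partial[d_i]_n Y_i$ equals $d_i(\theta) \cdot a_i(\theta) \Li_n(\alpha_i)_v$; summing gives that the last coordinate of $Y$ is $b(\theta) \sum_i c_i \Li_n(\alpha_i)_v = 0$. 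On the other hand, the functional equation $\exp_{\mathbf{C}^{\otimes n}} \circ \partial[d]_n = [d]_n \circ \exp_{\mathbf{C}^{\otimes n}}$ yields
\[
    \exp_{\mathbf{C}^{\otimes n}}(Y) = \sum_{i=1}^\ell [d_i]_n Q_i = \sum_{i=1}^\ell [b c_i]_n (0,\dots,0,\alpha_i)^\tr \in \mathbf{C}^{\otimes n}(\overline{k}).
\]

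At this point the essential input is the $v$-adic version of Yu's transcendence theorem \cite[Thm.~3.7]{Yu91}: any nonzero $Y \in \Lie\mathbf{C}^{\otimes n}(\mathbb{C}_v)$ with $\exp_{\mathbf{C}^{\otimes n}}(Y) \in \mathbf{C}^{\otimes n}(\overline{k})$ has last coordinate transcendental over $k$. Combined with the vanishing of the last coordinate of our $Y$ established above, this forces $Y = 0$, whence $\sum_{i=1}^\ell [b c_i]_n(0,\dots,0,\alpha_i)^\tr = 0$; since $b \neq 0$ and not all $c_i$ vanish, the $b c_i \in \mathbb{F}_q[t]$ furnish a nontrivial $\mathbb{F}_q[t]$-linear relation among the listed points. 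Because a single scalar $b$ (depending only on the $\alpha_i$'s) is used for every relation on the CPL side, multiplication by $b$ yields a rank-preserving injection from the $A$-module of $A$-coefficient relations among the $\Li_n(\alpha_i)_v$ into the $\mathbb{F}_q[t]$-module of relations among the points, which is precisely the asserted inequality. The main bookkeeping obstacle is ensuring that the Chang--Mishiba auxiliary polynomials $a_i$ can be absorbed into the coefficients without disturbing the vanishing of the last coordinate of $Y$, which is what makes the application of Yu's $v$-adic theorem clean.
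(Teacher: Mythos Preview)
Your proof is correct and follows the same strategy as the paper's: lift a $k$-linear relation among the $\Li_n(\alpha_i)_v$ to an $\mathbb{F}_q[t]$-relation among the points via the Chang--Mishiba auxiliary multipliers $a_i$ and the $v$-adic version of Yu's theorem \cite[Thm.~3.7]{Yu91}. The only difference is cosmetic: you clear the $a_i$ up front with the common multiple $b=\prod_j a_j$ and coefficients $d_i=bc_i/a_i$ to land directly on a relation among the $P_i$, whereas the paper applies $\partial[c_i]_n$ to each $Y_i$ and passes through the intermediate span of the $[a_i]_nP_i$ (which visibly has the same $\mathbb{F}_q[t]$-rank as the span of the $P_i$).
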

    
    \begin{proof}
        Suppose that there are non-trivial $k$-linear relations among $\Li_n(\alpha_1)_v,\dots,\Li_n(\alpha_\ell)_v$, that is, there exist $c_1,\dots,c_\ell\in k$ which are not all zero such that
        $$c_1\Li_n(\alpha_1)_v+\cdots+c_\ell\Li_n(\alpha_\ell)_v=0.$$
        After multiplying denominators of $c_1,\dots,c_\ell$ if it is necessary, we may assume that all the coefficients $c_1,\dots,c_\ell\in A$. Let $a_i\in A$ so that $[a_i(t)]_n(0,\dots,0,\alpha_i)^{\mathrm{tr}}\in\mathbf{C}^{\otimes n}(\overline{k})$ lies in the $v$-adic convergence domain of $\log_{\mathbf{C}^{\otimes n}}$. We set 
        $$Y_i:=
        \log_{\mathbf{C}^{\otimes n}}\left([a_i(t)]_n
        \begin{pmatrix}
            0 \\
            \vdots \\
            0 \\
            \alpha_i
        \end{pmatrix}\right)\in\Lie\mathbf{C}^{\otimes n}(\mathbb{C}_v)$$
        for each $1\leq i\leq \ell$. We claim that 
        $$X:=[c_1(t)]_n[a_1(t)]_n\begin{pmatrix}
            0 \\
            \vdots \\
            0 \\
            \alpha_1
        \end{pmatrix}+\cdots+[c_\ell(t)]_n[a_\ell(t)]_n\begin{pmatrix}
            0 \\
            \vdots \\
            0 \\
            \alpha_\ell
        \end{pmatrix}=0\in\mathbf{C}^{\otimes n}(\overline{k}).$$
        Let 
        $$Y:=\partial[c_1(t)]_nY_1+\cdots+\partial[c_\ell(t)]_nY_\ell\in\Lie\mathbf{C}^{\otimes n}(\mathbb{C}_v).$$
        Then
        $$\exp_{\mathbf{C}^{\otimes n}}(Y)=[c_1(t)]_n\exp_{\mathbf{C}^{\otimes n}}(Y_1)+\cdots+[c_\ell(t)]_n\exp_{\mathbf{C}^{\otimes n}}(Y_\ell)=X.$$
        If we write $Y=(y_1,\dots,y_n)^{\mathrm{tr}}$, then it is clear from the definition of $Y$ that
        $$y_n=c_1\Li_n(\alpha_1)_v+\cdots+c_\ell\Li_n(\alpha_\ell)_v=0.$$
        Then the $v$-adic version of Yu's theorem \cite[Thm.~3.7]{Yu91} implies that $Y$ is a zero vector and hence $X=0$ as desired. Consequently, we derive
        \begin{align*}
            \dim_k\Span_k\{\Li_n(\alpha_1)_v,\dots,\Li_n(\alpha_\ell)_v\}&\geq\rank_{\mathbb{F}_q[t]}\Span_{\mathbb{F}_q[t]}\{
        [a_1(t)]_n\begin{pmatrix}
            0 \\
            \vdots \\
            0 \\
            \alpha_1
        \end{pmatrix},\dots,
        [a_\ell(t)]_n\begin{pmatrix}
            0 \\
            \vdots \\
            0 \\
            \alpha_\ell
        \end{pmatrix}\}\\
        &=\rank_{\mathbb{F}_q[t]}\Span_{\mathbb{F}_q[t]}\{
        \begin{pmatrix}
            0 \\
            \vdots \\
            0 \\
            \alpha_1
        \end{pmatrix},\dots,
        \begin{pmatrix}
            0 \\
            \vdots \\
            0 \\
            \alpha_\ell
        \end{pmatrix}\}.
        \end{align*}
    \end{proof}

    Now we are ready to present the proof of Theorem~\ref{Thm:v_adic_Thm}. Most of the arguments are parallel to the proof of Theorem~\ref{Thm:Intro_Application}.
    \begin{proof}[Proof of Theorem~\ref{Thm:v_adic_Thm}]
        Note that we have
        \begin{align*}
            \ell&\geq \dim_k\Span_k\{\Li_n(\alpha_1)_v,\dots,\Li_n(\alpha_\ell)_v\}\\
            &\geq \rank_{\mathbb{F}_q[t]}\Span_{\mathbb{F}_q[t]}\{
        \begin{pmatrix}
            0 \\
            \vdots \\
            0 \\
            \alpha_1
        \end{pmatrix},\dots,
        \begin{pmatrix}
            0 \\
            \vdots \\
            0 \\
            \alpha_\ell
        \end{pmatrix}
        \}\\
        &=\ell.
        \end{align*}
        Here the first inequality is clear from the counting argument, the second inequality comes from Lemma~\ref{Lem:Lower_Bound_v}, and the third equality follows by Theorem~\ref{Thm:Linearly_Independence}.
    \end{proof}
    %Now we set $\mathbb{D}_{n,v}^{\mathrm{def}}:=\{x\in\overline{k}\mid |x|_v\leq 1\}$ to be the defining domain of Chang and Mishiba's $v$-adic CPLs at algebraic points. Then 
    %Due to the lack of Chang's result \cite[Thm.~5.4.3]{Cha14} in the $v$-adic setting, we only have the following $v$-adic analogue of Theorem~\ref{Thm:Linearly_Independence_of_CPLs_at_APs}.
    
    %\begin{thm}\label{Thm:v_adic_Thm}
    %    Let $L\subset\overline{k}$ be a finite extension of $k$. Let $n\in\mathbb{Z}_{>0}$ and $\alpha_i\in L$ with $|\alpha_i|_v\leq 1$ for $1\leq i\leq \ell$. Suppose that
    %    \begin{enumerate}
    %        \item $\alpha_1,\dots,\alpha_m$ are linearly independent over $\mathbb{F}_q$,
    %        \item $\mathrm{ord}_w(\alpha_i)>0$ for all $1\leq i\leq m$, if $w\in M_L$ and $w\mid\infty$,
    %        \item $\mathrm{ord}_w(\alpha_i)\geq 1-q$ for all $1\leq i\leq m$, if $w\in M_L$ and $w\not\mid\infty$.
    %    \end{enumerate}
    %    Then
    %    \[
    %        \dim_k\Span_k\{\Li_n(\alpha_1)_v,\dots,\Li_n(\alpha_m)_v\}=m.
    %    \]
    %\end{thm}

\section*{Acknowledgements}
%The authors acknowledge the anonymous referees for making suggestions and comments that have improved the former version of this paper. 
The first author was partially supported by the AMS-Simons Travel Grants. The second author thanks the JSPS Research Fellowships, JSPS Overseas Research Fellowships and the National Center for Theoretical Sciences in Hsinchu for their support during the research project. This work is also supported by JSPS KAKENHI Grant Number JP22J00006.

\end{document}